\newcommand\Diff{\operatorname{Diff}}
\newcommand\Acc{\operatorname{Acc}}
\newcommand\dedge{_{\rm edge}}
\newcommand\dsing{_{\rm sing}}
\newcommand{\PP}{\mathbb{P}}
\newcommand{\Vis}{\operatorname{Vis}}
\newcommand{\ZZ}{\mathbb{Z}}
\begin{document}
\title[Surfaces from four lines]{Log surfaces of Picard rank one\\ from four lines in the plane}
\author{Valery Alexeev}
\address{Valery Alexeev\\Department of Mathematics\\ University of Georgia\\ Athens\\GA 30605\\ USA}
\email{valery@math.uga.edu}
\author{Wenfei Liu}
\address{Wenfei Liu \\School of Mathematical 
Sciences\\ Xiamen University\\Siming South Road 422\\Xiamen\\ Fujian 361005\\ P.~R.~China}
\email{wliu@xmu.edu.cn}
\subjclass[2010]{Primary 14J29; Secondary 14J26, 14R05}
\date{January 25, 2018}
\keywords{log canonical surfaces, volume}
\thanks{The first author was supported by NSF grant DMS-1603604. The second author was partially supported by the NSFC (No.~11501012 and No.~11771294)}

\begin{abstract}
  We derive simple formulas for the basic numerical invariants of a 
  singular surface with Picard number one obtained by blowups and
  contractions of the four-line configuration in the plane. As an
  application, we establish the smallest positive volume and the
  smallest accumulation point of volumes of log canonical surfaces
  obtained in this way.
\end{abstract}
\maketitle
\tableofcontents

\section{Introduction}

Let $X$ be a projective normal surface and $B=\sum b_iB_i$ be an $\bR$-divisor
with coefficients in a DCC set, i.e.\ one satisfying the
descending chain condition.  Assume that the pair $(X,B)$ has log
canonical singularities and that the log canonical divisor $K_X+B$ is
ample.  It is known from \cite{alexeev1994boundedness-and-ksp-2} that
the set of volumes $(K_X+B)^2$ is also a DCC set and thus attains the
absolute minimum, a positive real number. The paper
\cite{alexeev2004bounding-singular} gives an effective lower bound for it
which however is unrealistically small.

In \cite{alexeev2016open-surfaces} we found surfaces with the smallest
known volumes for the sets $\cS_0=\{0\}$ and $\cS_1=\{0, 1\}$.  In
\cite{alexeev2018accumulation-points} we proved reasonable lower
bounds for the accumulation points of the sets of these volumes.  All
of the best known examples (including for other common DCC sets $\cS$)
are based on the following construction which despite its simplicity
is expected to be optimal.

\begin{construction}\label{con:main}
Let $L_1,L_2,L_3,L_4$ be four lines in general position in the
projective plane. Consider a diagram

\begin{center}
\begin{tikzcd}
& Y \arrow[dl, swap, "f"] \arrow[dr, "g"] \\
\bP^2 && X
\end{tikzcd}  
\end{center}
in which $f\colon Y\to\bP^2$ is a sequence of blowups at the points of
intersection between the curves which are either strict preimages
of $L_k$ or are exceptional divisors $E_j$. We will call these
curves \emph{the visible curves}. The morphism $g\colon Y\to X$ is
a contraction of some of the visible curves to a normal surface
$X$. The images $B_i$ of the non-contracted visible curves are called
\emph{survivors}. We will consider pairs $(X,B)$, where
$B=\sum b_iB_i$ is a linear combination of survivors.
\end{construction}

In this paper we tackle the case when the Picard number $\rho(X)$ is 1,
i.e.\ when there are exactly 4 survivors. In
this case the record surface in \cite{alexeev2016open-surfaces} for
the sets $\cS_0$ and $\cS_1$ has volume $\frac1{6351}$. 
Here we prove that
this bound is optimal for the sets $\cS_0,\cS_1$ and Picard number~1,
for the surfaces in Construction~\ref{con:main}.
We also prove that the minimum of the limit points of these volumes is
$\frac1{78}$. 
(Note however that the absolute champions in
\cite{alexeev2016open-surfaces} have $\rho=2$.)

The main contribution of this paper is a simple explicit formula for
$(K_X+B)^2$ which we then apply. This formula works without assuming
that $K_X+B$ is ample or that $(X,B)$ has log canonical singularities,
and may be used in other situations, for example for log del Pezzo
and log Calabi-Yau (or Enriques) surfaces.

Dongseon Hwang has informed us that he ran some computer experiments
for surfaces of Picard number 1 that did not yield a volume better
than $\frac1{6351}$ found in \cite{alexeev2016open-surfaces}. Actually
proving this bound, for example in the way we did it in
\cite[Thm. 8.2]{alexeev2016open-surfaces} for a special case, was the
main motivation for this paper.

\section{Surface singularities and their determinants}
\label{sec:singularities}

Let $X$ be a normal surface and $B=\sum b_iB_i$ be a $\bQ$- or
$\bR$-divisor with coefficients $0\le b_i\le 1$. Let $f\colon Y\to X$
be a resolution of singularities with a normal crossing divisor
$f_*\inv B \cup \Exc(\pi)$. Consider the natural formula
\begin{displaymath}
  K_Y  = f^*(K_X+B) + \sum a_j E_j.
\end{displaymath}
Here, the divisors $E_j$ are both the $f$-exceptional divisors and the
strict preimages of the divisors $B_i$; for the latter 
 one has
$a_i = -b_i$.  The numbers $a_j$ are called discrepancies,
$c_j= 1+a_j$ are log discrepancies, and $b_j=-a_j=1-c_j$ are
codiscrepancies.  The pair $(X,B)$ is called log canonical or lc
(resp. Kawamata log terminal or klt) if all $a_j\ge-1$,
i.e.\ $c_j\ge0$ (resp. $a_j>-1$, $c_j>0$, $b_j<1$).
One says that $(X, B)$ is canonical at a point $p\in X$ if the
discrepancy for any exceptional divisor over $p$ is nonnegative. 

Log canonical singularities of surfaces in any characteristic are
classified by their dual graphs, cf.
\cite{alexeev1992log-canonical-surface}. When $B=0$ the answer is as
follows.

\begin{definition}
  Let $f\colon\wX\to (X,0)$ be the minimal resolution and $E_i$ be the
  $f$-exceptional divisors. The \emph{dual graph} has a vertex for
  each curve $E_i$, marked by a positive integer $n_i=-E_i^2$. Two
  vertices are connected by 
  $E_i.E_j$
  edges.  Vice versa, each marked
  multigraph gives a quadratic form $(-E_i.E_j)$. For simplicity we
  always work with the negative of the intersection matrix
  since it is positive definite.  The diagonal entries of such a
  matrix are $>0$ and the off-diagonal entries are $\le0$.
\end{definition}

Then, first of all, singularities corresponding to arbitrary chains
$[n_1,\dotsc, n_k]$ with $n_i\ge2$ are klt. These are in a bijection
with rational numbers $0<\frac{p}{q}<1$ via the Hirzebruch-Jung (HJ)
continued fractions
\begin{displaymath}
  \frac{q}{p} = n_1 - \frac1{ n_2 - \frac1{ n_3 - \dots} }.
\end{displaymath}
Here, $q=\det (-E_i.E_j)$ is the determinant of the matrix with the
diagonal entries $n_i$ and with $-1$ on the diagonals adjacent to it.
Our notation for this determinant is $|n_1, n_2, \dotsc, n_k|$.

\begin{remark}\label{rem:generalized-HJ}
  We will need a slight generalization of this constructions, as
  follows. Let $\frac{p}{q}$ be a fraction \emph{larger than or equal to 1}, so
  that $\frac{q}{p} \le 1$. Then by the same continued fraction
  expansion it corresponds to the chain $[1, n_2, \dotsc, n_k]$. In
  this way, we get a bijection between the positive rational numbers
  $\frac{p}{q}$ and the chains $[n_1, n_2, \dotsc, n_k]$ in which the
  starting number is $n_1\ge1$ and all others are $n_i\ge2$.
\end{remark}

In addition to the chains, graphs with a positive definite quadratic
form and a single fork from which three chains with determinants
$q_1,q_2,q_3$ emanate are klt iff
$\frac1{q_1}+\frac1{q_2} + \frac1{q_3} > 1$, and they are log
canonical iff $\frac1{q_1}+\frac1{q_2} + \frac1{q_3} \ge 1$. The
possibilities for $(q_1,q_2,q_3)$ are $(2,2,n)$, $(2,3,3)$, $(2,3,4)$,
$(2,3,5)$, $(3,3,3)$, $(2,4,4)$, $(2,3,6)$ and correspond to the Lie
types $D_{n+2}$, $E_6$, $E_7$, $E_8$, $\wE_6$, $\wE_7$, $\wE_8$.
There is also a graph of type $\wD_n$ with two forks and four legs
with determinants $(2,2,2,2)$. Finally, there is graph of type $\wA_n$
which is a cycle.
For the $\wA_1$ and $\wA_2$ graphs, the curves $E_i$ should intersect
at distinct point: 
tacnode and triple points are not allowed.
We denote the determinant of the cycle with marks $n_i$ by
$|n_1, n_2, \dotsc, n_k, \circlearrowleft|$.

\begin{remark}
  When all the marks are $n_i=2$, the above are the dual graphs of Du
  Val singularities and of Kodaira's degenerations of elliptic curves.
  But here any marks $n_i$ are allowed, as long as the form
  $(-E_i.E_j)$ is positive definite.
\end{remark}

For the set $\cS_1=\{0, 1\}$ and $B\ne0$, i.e.\ when $B$ is nonempty and
reduced, one can have $B$ to be attached to one or both ends of a
chain, and to a leg of a $D_n$ graph if the other legs have
$(q_1,q_2)=(2,2)$. A degenerate case of this is attaching $B$ to the
middle of a chain $[2,n,2]$, this is also allowed.
This completes the list.

\begin{lemma}\label{lem:det-expansion}
  Divide the set of vertices 
  $V(\Gamma)$ into two disjoint subsets $V_1\sqcup V_2$, and let
  $\Gamma_i$ will be the induced subgraphs on these vertex
  sets. Assume that there are no cycles in $\Gamma$ involving both
  $\Gamma_1$ and $\Gamma_2$, in other words $h_1(\Gamma) =
  h_1(\Gamma_1) + h_1(\Gamma_2)$, where $h_1$ denotes the rank of the
  first homology group of a connected graph. 
  Then
  \begin{displaymath}
    \det \Gamma = \det\Gamma_1 \cdot \det\Gamma_2 +
    (-1)^k
    \hskip -22pt
    \sum_{(u_1,v_1),\dotsc,(u_k,v_k)}
    \hskip -22pt
    \det(\Gamma_1 - u_1 \dotsc -u_k) \cdot
    \det(\Gamma_2 -v_1 \dotsc - v_k), 
  \end{displaymath}
  in which the sum goes over collections of edges $(u_i,v_i)$ in $\Gamma$ with
  $u_i\in\Gamma_1$, $v_i\in\Gamma_2$. 
\end{lemma}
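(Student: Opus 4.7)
The plan is to compute $\det \Gamma$ from the Leibniz expansion of its (negated) intersection matrix $M$ and reorganize the resulting sum according to how each permutation of $V(\Gamma)$ interacts with the partition $V_1 \sqcup V_2$.

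First I write
\begin{equation*}
  \det \Gamma = \sum_{\sigma} \operatorname{sgn}(\sigma) \prod_{i \in V(\Gamma)} M_{i,\sigma(i)},
\end{equation*}
and decompose each permutation $\sigma$ into disjoint cycles. A fixed point at $i$ contributes the diagonal entry $n_i$; a $2$-cycle $(i,j)$ contributes $M_{ij}M_{ji}$, which is nonzero only when $i$ and $j$ are adjacent; a cycle of $\sigma$ of length $\ell \ge 3$ is nonzero only when its support is traversed by a simple $\ell$-cycle of $\Gamma$. The hypothesis $h_1(\Gamma) = h_1(\Gamma_1) + h_1(\Gamma_2)$ says every simple cycle of $\Gamma$ lies in $\Gamma_1$ or in $\Gamma_2$, so every cycle of $\sigma$ of length at least $3$ that contributes a nonzero term stays on one side of the partition.

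Consequently, the only way a contributing $\sigma$ crosses between $V_1$ and $V_2$ is through $2$-cycles along cross-edges, and the collection of such $2$-cycles forms a matching $T = \{(u_1,v_1), \dots, (u_k,v_k)\}$ of cross-edges of $\Gamma$. Grouping the Leibniz sum by $T$, what remains of $\sigma$ is an independent pair of permutations, one of $V_1 \setminus \{u_1, \dots, u_k\}$ and one of $V_2 \setminus \{v_1, \dots, v_k\}$; resumming by Leibniz in reverse gives the factor $\det(\Gamma_1 - u_1 - \dots - u_k) \cdot \det(\Gamma_2 - v_1 - \dots - v_k)$. Each cross $2$-cycle contributes sign $-1$ and entry product $(-1)(-1) = 1$, so the matching $T$ of size $k$ carries overall factor $(-1)^k$. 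Splitting off the empty matching as $\det\Gamma_1 \cdot \det\Gamma_2$ and summing the remaining $T$'s yields the claimed identity.

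The one step needing care is this cycle-structure reduction: one must verify that every contributing cycle of $\sigma$ of length $\ell \ge 3$ literally traces a simple $\ell$-cycle of $\Gamma$ on its support, so that the $h_1$-hypothesis rigorously forbids such a cycle from crossing the partition. Once this is in place the rest is a routine regrouping of Leibniz terms, and one may check the formula on small test cases (for instance, a three-vertex chain expanded as $V_1 \sqcup V_2$ with $|V_2|=1$ reproduces the familiar recursion $|n_1,n_2,n_3| = n_3 \cdot |n_1,n_2| - |n_1|$).
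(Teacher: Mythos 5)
Your argument is correct and is essentially the paper's own proof, made precise: the paper also expands $\det\Gamma$ into the sum of $n!$ products and observes that a contributing permutation cannot cross the partition except via $2$-cycles on cross-edges, since a longer permutation cycle crossing from $\Gamma_1$ to $\Gamma_2$ would trace a graph cycle forbidden by the $h_1$ hypothesis. Your filled-in details (including the observation that the hypothesis also rules out multiple cross-edges, so each cross $2$-cycle contributes exactly $(-1)$) are a faithful elaboration of that one-sentence argument.
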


\begin{proof}
  In the expansion of $\det\Gamma$ into the sum of $n!$ products, the
  terms which are not listed in the above formula correspond to paths that
  enter from $\Gamma_1$ into $\Gamma_2$ and then eventually die, as
  there are no cycles coming back.
\end{proof}

\begin{corollary} One has
  $|n_1, n_2, \dotsc, n_k| = n_1 \cdot |n_2, \dotsc, n_k| - |n_3 \dotsc,
  n_k|$ and \newline
  $|n_1,\dotsc, n_i,n_{i+1}, \dotsc  n_k| =
  |n_1,\dotsc, n_i| \cdot |n_{i+1}, \dotsc  n_k| -
  |n_1,\dotsc, n_{i-1}| \cdot |n_{i+2}, \dotsc  n_k|.$
\end{corollary}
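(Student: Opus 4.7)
The plan is to apply Lemma~\ref{lem:det-expansion} directly in both cases, each time partitioning the vertex set of the chain so that exactly one edge crosses between the two parts. Because a chain has no cycles, the hypothesis $h_1(\Gamma)=h_1(\Gamma_1)+h_1(\Gamma_2)$ is automatic, and the sum in the lemma reduces to a single term with $k=1$, contributing the sign $(-1)^1=-1$.

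For the first identity I would take $V_1=\{v_1\}$ and $V_2=\{v_2,\dotsc,v_k\}$. The unique edge between the parts is $(v_1,v_2)$. The induced subgraphs have determinants $\det\Gamma_1=n_1$ and $\det\Gamma_2=|n_2,\dotsc,n_k|$; removing $v_1$ from $\Gamma_1$ leaves the empty graph (with determinant $1$ by convention), and removing $v_2$ from $\Gamma_2$ leaves the chain $[n_3,\dotsc,n_k]$. Plugging these into the lemma gives exactly
\[
|n_1,n_2,\dotsc,n_k|=n_1\cdot|n_2,\dotsc,n_k|-|n_3,\dotsc,n_k|.
\]

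For the second identity I would take $V_1=\{v_1,\dotsc,v_i\}$ and $V_2=\{v_{i+1},\dotsc,v_k\}$, so that the only crossing edge is $(v_i,v_{i+1})$. Then $\det\Gamma_1=|n_1,\dotsc,n_i|$ and $\det\Gamma_2=|n_{i+1},\dotsc,n_k|$; removing $v_i$ from $\Gamma_1$ leaves the chain $[n_1,\dotsc,n_{i-1}]$, and removing $v_{i+1}$ from $\Gamma_2$ leaves $[n_{i+2},\dotsc,n_k]$. The lemma then yields precisely the stated formula.

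There is no real obstacle: both formulas are immediate specializations of the lemma to the chain graph. The only minor convention to fix is that the empty chain has determinant $1$, which covers the boundary cases $i=1$ and $i=k-1$ of the second identity (and the $k=1$ case of the first).
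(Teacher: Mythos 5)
Your proposal is correct and is exactly the argument the paper intends: the corollary is stated without proof precisely because it is the specialization of Lemma~\ref{lem:det-expansion} to a chain with a single crossing edge, where the sum collapses to the one term $(-1)^1\det(\Gamma_1-u_1)\det(\Gamma_2-v_1)$, together with the stated convention that an empty chain has determinant $1$. Nothing is missing.
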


Here, by convention, the determinant of a chain of length $k=0$ is
1. There is a generalization of \eqref{lem:det-expansion} when there
are cycles between $\Gamma_1,\Gamma_2$. We will not need it since the
only non-tree log canonical graph is a cycle. For it, it is easy to
prove 
\begin{displaymath}
  |n_1, n_2, \dotsc, n_k, \circlearrowleft| =
  n_1 \cdot |n_2, \dotsc, n_k| -
  |n_3, \dotsc, n_k| - |n_2, \dotsc, n_{k-1}| - 2,
\end{displaymath}
with 2 accounting for the two directed cycles in $\Gamma$, clockwise
and counter clockwise.

\begin{corollary}\label{cor:attaching-2s}
  The determinant of a graph $[(m-1)\times 2] \cup \Gamma$ obtained by
  attaching the chain $[(m-1)\times 2] = [2,\dotsc, 2]$ of $(m-1)$ 2's to a
  vertex $v$ with mark $n$ is
  \begin{displaymath}
    \det\, [(m-1)\times 2] \cup [n] \cup \Gamma'  =
    - \det\,  [-m, n-1] \cup \Gamma' .
  \end{displaymath}
  where 
  $\Gamma'= \Gamma - v$ and $[-m, n-1] \cup \Gamma' $
  is the graph obtained by replacing the mark $n$ of
  $v\in\Gamma$
    by $n-1$ and attaching a single vertex marked $-m$.
\end{corollary}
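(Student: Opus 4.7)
The plan is to apply Lemma \ref{lem:det-expansion} to each side separately and then match the two expressions using the linearity of $\det$ in the diagonal mark at $v$.

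First, for the left-hand side, I would split $[(m-1)\times 2]\cup [n]\cup \Gamma'$ along the unique edge where the attached chain meets $v$. With $\Gamma_1 = [(m-1)\times 2]$ and $\Gamma_2 = [n]\cup\Gamma'$, this single edge $(u,v)$ is the only connection, so Lemma \ref{lem:det-expansion} gives $\det = \det\Gamma_1\cdot\det\Gamma_2 - \det(\Gamma_1-u)\cdot\det(\Gamma_2-v)$. The evaluation $|2,\dotsc,2|$ (with $k$ twos) $= k+1$ is immediate by induction from the first Corollary above, so $\det\Gamma_1 = m$ and $\det(\Gamma_1-u) = m-1$. This produces
$$ \det\bigl([(m-1)\times 2]\cup[n]\cup \Gamma'\bigr) = m\cdot\det([n]\cup\Gamma') - (m-1)\cdot\det\Gamma'. \quad (\star) $$

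For the right-hand side I would apply the same lemma to $[-m, n-1]\cup\Gamma'$, now splitting off the single vertex marked $-m$ (whose only neighbor is the modified vertex $v$). Since $\det[-m] = -m$ and the empty graph has determinant $1$ by convention, this yields $\det([-m,n-1]\cup\Gamma') = -m\cdot\det([n-1]\cup\Gamma') - \det\Gamma'$. Finally, $\det([n]\cup\Gamma')$ is linear in the mark $n$ (expand along the row/column of $v$), so $\det([n-1]\cup\Gamma') = \det([n]\cup\Gamma') - \det\Gamma'$. Substituting gives $\det([-m,n-1]\cup\Gamma') = -m\cdot\det([n]\cup\Gamma') + (m-1)\cdot\det\Gamma'$, which is precisely $-1$ times the right-hand side of $(\star)$, as required.

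The one subtle point—arguably the ``main obstacle,'' though it is a sanity check rather than a difficulty—is that Lemma \ref{lem:det-expansion} was motivated by positive definite intersection matrices of log canonical singularities, yet its proof is the ordinary permutation expansion of $\det$ and applies verbatim to the graph $[-m,n-1]\cup\Gamma'$ whose diagonal entry $-m$ is negative. Once this is noted, the whole argument is just bookkeeping of three determinant expansions.
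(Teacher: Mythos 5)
Your proof is correct and follows essentially the same route as the paper: both apply Lemma~\ref{lem:det-expansion} to the same two decompositions (splitting off the chain of 2's on one side, and the vertex marked $-m$ on the other) and then match the results via linearity of the determinant in the diagonal mark at $v$. Your remark that the lemma is purely a statement about determinant expansion and so applies to the negative mark $-m$ is a fair sanity check, implicit in the paper.
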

\begin{proof}
  Applying~\ref{lem:det-expansion} twice gives
  \begin{eqnarray*}
     \det\, [(m-1)\times 2] \cup [n] \cup \Gamma'  &=&
      m  \det\, [n] \cup \Gamma \ -\  (m-1)\det\,\Gamma' \quad\text{and}\\
     \det\,  [-m, n-1] \cup \Gamma'   &=&
      (-m) \det\,  [n-1] \cup \Gamma' \ -\  \det\, \Gamma' \\
    &=& (-m) \det\,  [n] \cup \Gamma'  + m \det\, \Gamma' - \det
        \Gamma'.
  \end{eqnarray*}
\end{proof}

Below, we will need to deal with the following situation. Let
$\bm{\Gamma}(n_i)$ be a  ``core graph'' with the vertices marked
$n_i$. On top of each vertex $v_i$ we ``graft'' several chains corresponding
to HJ fractions
$\frac{q_{ij}}{p_{ij}}$ as in Remark~\ref{rem:generalized-HJ}. We
emphasize that we \emph{do not attach} a chain. Instead,
\emph{grafting} means that we put an end of the chain for the fraction
$\frac{q_{ij}}{p_{ij}}$ \emph{on top} of the vertex $v_i$. Thus, if
\begin{displaymath}
  \frac{p_{ij}}{q_{ij}} = n_{ij} - \frac{r_{ij}}{q_{ij}},
  \qquad p_{ij} = n_{ij} q_{ij} - r_{ij}
\end{displaymath}
then the mark of the vertex $v_i$ in the new graph is $n_i + \sum_j n_{ij}$ and
the legs have determinants~$q_{ij}$. The legs ``attached'' to the core
correspond to the fractions $\frac{r_{ij}}{q_{ij}}$.

We will call thus obtained graph $\wh\Gamma\big( n_i;
\frac{q_{ij}}{p_{ij}} \big)$ a ``hairy graph'', with 
hairs being the chains coming out of the vertices of the core graph.

\begin{theorem}\label{thm:hairy-graph}
  The determinant of a hairy graph can be computed by the formula
  \begin{displaymath}
    \det\wh\Gamma \big( n_i; \frac{q_{ij}}{p_{ij}} \big) =
    \det\bm{\Gamma} \big(n_i + \sum_j \frac{p_{ij}}{q_{ij}} \big)
    \cdot \prod_{i,j} q_{ij} 
  \end{displaymath}
  where the core graph $\bm{\Gamma(n_i)}$ has new marks
  $\bm{n_i} = n_i + \sum_j \frac{p_{ij}}{q_{ij}}$. Alternatively,
  \begin{displaymath}
    \det\wh\Gamma \big( n_i; \frac{q_{ij}}{p_{ij}} \big) =
    \det\bm{\widetilde\Gamma} \big(n_i; -\frac{q_{ij}}{p_{ij}} \big)
    \cdot \prod_{i,j} (-p_{ij})
  \end{displaymath}
  where the graph
  $\bm{\widetilde\Gamma}$ is obtained from the graph $\Gamma(n_i)$ by
  adding a single vertex of weight $-\frac{q_{ij}}{p_{ij}}$ for each hair.
\end{theorem}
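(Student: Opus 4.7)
The plan is to prove both formulas by a single application of the Schur complement (block determinant) identity to the intersection matrix, applied to two different block decompositions.

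For the first formula, index the matrix $M$ of $\wh\Gamma$ as $M=\bigl(\begin{smallmatrix} A & B \\ B^{T} & D \end{smallmatrix}\bigr)$, with $A$ the block on the core vertices $V(\bm{\Gamma})$ and $D$ the block on the hair vertices. Since distinct hairs are disjoint, $D$ is block-diagonal with one block $M_{ij}$ per hair, namely the matrix of the leg chain $[m_{ij,1},\dotsc,m_{ij,s}]$; identifying this leg with the HJ expansion of $q_{ij}/r_{ij}$ gives $\det M_{ij}=q_{ij}$. Each $v_i$ is connected to hair $(i,j)$ only through $m_{ij,1}$ with a single $-1$ entry, so $BD^{-1}B^{T}$ is diagonal with $(v_i,v_i)$-entry equal to $\sum_j (M_{ij}^{-1})_{11}$. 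By Cramer's rule combined with the recursion $|m_{ij,1},\dotsc,m_{ij,s}|=q_{ij}$ and $|m_{ij,2},\dotsc,m_{ij,s}|=r_{ij}$ (the Corollary after Lemma~\ref{lem:det-expansion}), this equals $\sum_j r_{ij}/q_{ij}$. The $(v_i,v_i)$ entry of $A$ is $n_i+\sum_j n_{ij}$, since grafting identifies the top vertex (marked $n_{ij}$) of the chain for $p_{ij}/q_{ij}$ with $v_i$; after Schur correction it becomes $n_i+\sum_j(n_{ij}-r_{ij}/q_{ij}) = n_i+\sum_j p_{ij}/q_{ij}$, using $p_{ij}=n_{ij}q_{ij}-r_{ij}$. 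The identity $\det M = \det D\cdot\det(A-BD^{-1}B^{T})$ then yields the first formula.

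For the second formula, apply the same Schur complement identity to $\bm{\widetilde\Gamma}$, eliminating the block on the auxiliary $1\times 1$ vertices $u_{ij}$ of weight $-q_{ij}/p_{ij}$. Each $u_{ij}$ is joined to $v_i$ by a single $-1$ edge, so the Schur correction at $(v_i,v_i)$ is $\sum_j (-1)(-q_{ij}/p_{ij})^{-1}(-1) = \sum_j(-p_{ij}/q_{ij})$, and the Schur complement is exactly $\bm{\Gamma}$ with new marks $n_i+\sum_j p_{ij}/q_{ij}$, identical to the core appearing in the first formula. Thus $\det\bm{\widetilde\Gamma}=\prod_{i,j}(-q_{ij}/p_{ij})\cdot\det\bm{\Gamma}(n_i+\sum_j p_{ij}/q_{ij})$, and multiplying through by $\prod_{i,j}(-p_{ij})$ recovers $\prod_{i,j} q_{ij}\cdot\det\bm{\Gamma}$, which equals $\det\wh\Gamma$ by the first formula.

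The only substantive calculation is the identification $(M_{ij}^{-1})_{11}=r_{ij}/q_{ij}$, which is immediate from Cramer's rule together with the chain-determinant recursion and the HJ expansion $p_{ij}/q_{ij}=n_{ij}-r_{ij}/q_{ij}$. Everything else is bookkeeping with the block determinant identity, which applies uniformly regardless of whether the core graph $\bm{\Gamma}$ is a tree or a cycle.
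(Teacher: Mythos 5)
Your proof is correct, but it runs on a different engine than the paper's. The paper disposes of Theorem~\ref{thm:hairy-graph} in one line by repeatedly applying the combinatorial expansion of Lemma~\ref{lem:det-expansion}: splitting off one hair at a time gives $\det(\text{core}\cup\text{hair}) = q_{ij}\det(\text{core}) - r_{ij}\det(\text{core}-v_i)$, and the fractional mark $n_i+\sum_j p_{ij}/q_{ij}$ is then absorbed using linearity of the determinant in the $i$-th row (only the diagonal entry changes). You instead invoke the Schur complement identity $\det M=\det D\cdot\det(A-BD^{-1}B^T)$, eliminating all hair blocks at once; the key computation $(M_{ij}^{-1})_{11}=r_{ij}/q_{ij}$ is exactly right (cofactor of the top vertex is the determinant $r_{ij}$ of the truncated chain, over $\det M_{ij}=q_{ij}$), and your sign bookkeeping for both formulas checks out, including the verification that the two formulas have the same Schur complement $\bm\Gamma(n_i+\sum_j p_{ij}/q_{ij})$ so that the second follows from the first. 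What your route buys: the rational marks appear automatically as the Schur complement rather than needing the row-linearity step, the correction term is visibly diagonal because each hair touches exactly one core vertex, and the argument is manifestly insensitive to whether the core is a tree or a cycle (Lemma~\ref{lem:det-expansion} carries a no-cycles-between-the-parts hypothesis, which is in fact satisfied here since hairs are chains attached at a single end, but your version does not even need to check it). The one hypothesis you use silently is the invertibility of the hair block, i.e.\ $q_{ij}\neq 0$, which holds since the $q_{ij}$ are positive integers.
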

\begin{proof}
  Follows by repeatedly applying Lemma~\ref{lem:det-expansion}. 
\end{proof} 

\begin{example}\label{ex:attaching-2s-again}
  Attaching the chain $[(m-1)\times 2]$ to a vertex with mark $n$ is
  the same as grafting the chain $[1, (m-1)\times 2]$ onto a vertex with
  mark $(n-1)$.
  The chain $[1, (m-1)\times 2]$ corresponds to the
  HJ fraction $\frac{q}{p} = \frac{m}{1}$.
  The second formula of \eqref{thm:hairy-graph} is now precisely 
  \eqref{cor:attaching-2s}. The first formula of
  \eqref{thm:hairy-graph} gives an alternative expression for this
  determinant as 
  $m \det\bm{\Gamma}$, where $\bm{\Gamma}$ is the core graph with the
  weight ${\bm{n}} = n-1+\frac1{m}$ at the vertex $v$.

\end{example}

\section{Weight vectors of visible curves and weight matrices}
\label{sec:weight-matrices}
 
We follow the notations of Construction~\ref{con:main}. Here, we encode
each visible curve uniquely by a \emph{weight vector} in $\bZ^4$, and
the entire surface $X$ by a \emph{weight matrix}.

\begin{definition}
  The weight vector of a line $L_i$ is the vector $e_i$ in the
  standard Euclidean basis of $\bZ^4$. For an exceptional curve $E$ of
  $f$ its weight vector is $(w_1,w_2,w_3,w_4)$, where $w_i$ is the
  coefficient of $E$ in the full pullback $f^*(L_i)$.
\end{definition}

In our situation, every visible curve other than $f_*\inv L_i$ lies
over the intersection of exactly two lines, say $L_i\cap L_j$. For it,
$w_i>0$, $w_j>0$ and $w_k=0$ for $k\ne i,j$. 
We can identify a weight vector with $w_iw_j\neq 0$ to an element in
$\ZZ_i\times \ZZ_j\cong \ZZ^2$ where $\ZZ_i$ and $\ZZ_j$ are the
$i$-th and $j$-th factors of $\ZZ^4$ respectively.

\begin{definition}\label{def:weight-matrix}
  The \emph{weight matrix $W$} of a surface $X$ is an $N \times 4$
  matrix whose rows are the weight vectors of the survivors
  $E_1, \dotsc, E_N$, where $N=\rho(X)+3$.  Thus, the four columns of
  $W$ are the pullbacks $f^*(L_i)$ 
  for the four lines, written as
  linear combinations of the visible curves, with all but coefficients
  in $E_i$ ignored.

  The \emph{reduced weight matrix $\oW$} is an $N\times 3$ matrix
  with columns $f^*(L_i-L_4)$.
\end{definition}

\begin{definition}\label{def:extended-weight-matrix}
  Given a pair $(X,B = \sum b_iB_i)$ as in
  Construction~\ref{con:main}, the \emph{extended weight matrix} $\hW$
  is an $(N+1) \times 5$ matrix whose entries in the last column are
  the log discrepancies $c_i=1-b_i$, and with the row $(1,\dotsc, 1)$
  added at the bottom.
\end{definition}

Note that $W$ and $\hW$ are square matrices iff $\rho(X)=1$,
of sizes $4\times 4$ and $5\times 5$ respectively.
We now establish a description of the dual graph of the visible curves
on $Y$ in terms of the weight vectors. We begin with the following
situation.

\begin{construction}\label{con:single-curve}
  Let $L_i,L_j$ be two smooth curves on a smooth surface $S$,
  intersecting normally at a single point~$P$. These are not
  necessarily lines in $\bP^2$; we will later apply this to the
  lines. Since there is only one point, the weight vector will be in
  $\bZ^2$ and not $\bZ^4$.
  We will begin with the initial dual graph that is the edge
  $\{v_i, v_j\}$ and we will give the vertices the initial marks 0.

  Now consider a sequence of blowups $Y\to S$ over $P$. Each blowup
  introduces a $(-1)$-curve $E$. On the next surface let us blow up
  one of the two points of intersection of $E$ with the neighboring
  visible curves, either the one on the left or the one on the
  right. The old $(-1)$-curve becomes a $(-2)$-curve and there is a
  new $(-1)$-curve. Then we repeat. Thus, the entire procedure is
  encoded in a binary sequence, such as LRRRLR. Let
  $(w_{i}, w_{j})\in\bZ^2$ be the weight vector of the $(-1)$-curve
  $E$ after the final blowup: $w_{i}$ is the coefficient of $E$ in
  $f^*L_i$ and $w_{j}$ is the coefficient of $E$ in $f^*L_j$.
  Let $\Gamma$ be the final graph; it is a chain.
\end{construction}

\begin{theorem}\label{thm:farey-fractions}
  In Construction~\ref{con:single-curve}, let $\Gamma_i$ be the chain
  on the left of the $(-1)$-curve $E$ in the final graph $\Gamma$, the
  one containing $v_i$. Let $\Gamma_j$ be the chain to the right of $E$,
  the one containing $v_j$. Then
  \begin{enumerate}
  \item $\det\Gamma_i = w_{i}$ and $\det (\Gamma_i - v_i) = w_{j}$.
  \item $\det\Gamma_j = w_{j}$ and $\det (\Gamma_j - v_j) = w_{i}$.
  \end{enumerate}
  In other words, $\Gamma_i$ corresponds to the HJ fraction
  $\frac{w_{j}}{w_{i}}$ and $\Gamma_j$ to $\frac{w_{i}}{w_{j}}$.
  In this way, all the possible visible curves on the blowups over the
  point $P$ are in a bijection with the pairs of coprime positive
  integers $(w_{i}, w_{j})$, or equivalently with 
  the positive
  rational numbers $\frac{w_{j}}{w_{i}}$ written in the simplest form.
\end{theorem}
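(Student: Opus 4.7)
The proof is by induction on the number $n$ of blowups in Construction~\ref{con:single-curve}, tracking several invariants of the chain $\Gamma$ simultaneously. At each stage, write $(w_i,w_j)$ for the weight of the current $(-1)$-curve $E$, and $(\alpha,\beta)$, $(\alpha',\beta')$ for the weights of its left and right neighbors $u,u'$ in $\Gamma$.

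The first step is an auxiliary identity: $E = u + u'$ coordinatewise, i.e.\ $w_i = \alpha + \alpha'$ and $w_j = \beta + \beta'$. This is immediate at $n=1$ since $(1,1) = (1,0) + (0,1)$, and is preserved by both types of moves: blowing up $E \cap u$ produces a new $(-1)$-curve of weight $E+u$, whose new left and right neighbors are the old $u$ and the old $E$, which again sum to the same. The R move is symmetric.

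Next, I would prove (1) and (2) jointly with the strengthening
\[
\det(\Gamma_i - u) = \alpha, \qquad \det(\Gamma_i - u - v_i) = \beta,
\]
together with the symmetric statement for $\Gamma_j$ and $u'$ (with the conventions $(\alpha,\beta)=(1,0)$ when $u=v_i$ and $(\alpha',\beta')=(0,1)$ when $u'=v_j$, so that the second equality becomes vacuous). The base case $n=1$ is direct. For the inductive step on an L move, the new $\Gamma_i$ has the same vertices as before but with the mark of $u$ bumped by $1$, while the new $\Gamma_j$ is the old $\Gamma_j$ with the old $E$ prepended (mark $2$). Expanding along the modified row via Lemma~\ref{lem:det-expansion} gives
\[
\det(\text{new }\Gamma_i) = \det\Gamma_i + \det(\Gamma_i - u) = w_i + \alpha
\]
and
\[
\det(\text{new }\Gamma_j) = 2\det\Gamma_j - \det(\Gamma_j - u') = 2w_j - \beta' = w_j + \beta,
\]
where the last equality uses $\beta' = w_j - \beta$ from the auxiliary identity. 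These match the coordinates of the new $(-1)$-curve $E' = E+u$; the computations for $\det(\text{new }\Gamma_i - v_i)$ and $\det(\text{new }\Gamma_j - v_j)$ are analogous. Since the new neighbors of $E'$ are the old $u$ (left) and the old $E$ (right), the strengthened invariants at the next stage follow by direct substitution into the unchanged subchains. The R move is handled symmetrically.

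The bijection with coprime pairs of positive integers is then the classical Stern--Brocot description: the mediant relation $E = u + u'$ preserves coprimality of $(w_i, w_j)$, and every coprime pair with $w_i, w_j \geq 1$ arises uniquely from some sequence of L/R moves. The main obstacle is identifying the right strengthening of the inductive hypothesis: a naive induction on $\det\Gamma_i = w_i$ alone does not close up under L moves, since one then needs $\det(\Gamma_i - u)$; only upon recognizing this quantity as the first coordinate of $u$'s weight does the induction go through, and symmetrically for the R step.
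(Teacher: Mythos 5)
Your proof is correct and follows essentially the same route as the paper's: induction on the number of blowups, using the Farey/mediant rule for the weight vectors together with Lemma~\ref{lem:det-expansion} to track how the determinants of the two chains transform under an L or R move. Your strengthened induction hypothesis (recording $\det(\Gamma_i-u)$ and $\det(\Gamma_i-u-v_i)$ as the coordinates of the neighbor's weight vector) simply makes explicit what the paper's terser induction carries implicitly by applying the statement to the neighboring curves from earlier stages.
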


\begin{examples}\label{ex:weight-vectors}
  (1) The weight vector $(w_{i},w_{j})=(1,1)$ corresponds to a
  single blowup at $P$ and the empty sequence of Ls and Rs. The final
  graph is $\Gamma = [1, \bm{1}, 1]$ with the bold $\bm{1}$
  corresponding to the final $(-1)$-curve $E$.

  \smallskip (2) The weight vector $(w_{i},w_{j})=(n,1)$
  corresponding to the HJ fraction $\frac1{n}$ gives
  $\Gamma = [n, \bm{1}, (n-1)\times 2, 1]$.  The sequence is L\dots{}L
  repeated $(n-1)$ times.

  \smallskip (3)
  The weight vector $(w_{i},w_{j})=(1,n)$
  corresponding to the HJ fraction $\frac{n}{1}$
  gives $\Gamma = [1,(n-1)\times 2, \bm{1}, n]$.
  The sequence is R\dots{}R repeated $(n-1)$ times.

  \smallskip (4) The sequence LRRRLR gives
  $\Gamma = [2,2,2,3,2, \bm{1}, 3,5,1]$, and the weight vector is
  $(w_{i},w_{j})=(14,11)$. The HJ fraction is $\frac{11}{14}$.
\end{examples}

\begin{proof}[Proof of Theorem~\ref{thm:farey-fractions}]
  Blowing up a point corresponds to inserting a vertex on an edge
  between two vertices $(u,v)$. If the weight vectors of $u$ and $v$
  are $(a,b)$ and $(c,d)$ then the new weight vector is $(a+c,
  b+d)$. We see that the sequence of the blowups is the same as the
  well known procedure for the Farey fractions, encoding every
  positive rational number $\frac{w_{j}}{w_{i}}$ in a binary
  sequence of Ls and Rs. 

  By Lemma~\ref{lem:det-expansion}, the determinants of the chains
  transform exactly the right way: for the new graph
  $(a+c, b+d) = (\det \Gamma_i, \det \Gamma_j) = (\det \Gamma_j - v_j,
  \det \Gamma_i - v_i)$.  In particular, $w_{i}$ and $w_{j}$ remain
  coprime. We are done by induction.
\end{proof}

\begin{construction}\label{con:many-curves}
  We now consider a more general case. Let $Y\to (S,P)$ be some sequence
  of blowups at the points of intersection of the strict preimages of
  $L_i,L_j$ and the 
  exceptional divisors. Let $\Gamma$ be the dual graph of the inverse
  image of $L_i+L_j$.
  We will fix several visible curves $E^k$ on $Y$, $k=1,\dots, N$. By
  Theorem~\ref{thm:farey-fractions} each of them corresponds to a pair
  $(w^k_{i},w^k_{j})$.

  Next, we will consider a birational morphism $g\colon Y\to X$ which
  contracts all the curves except for the $E^k$s. Thus, we may assume
  that in there are no $(-1)$-curves between the $E^k$s. If two of the
  curves, say $E^k$ and $E^{k+1}$ have no curves between them then
  at most one of them could be a $(-1)$-curve. This happens iff in the
  Farey procedure one of the fractions follows another, i.e.\ when
  $w^k_{i}w^{k+1}_{j} - w^k_{j}w^{k+1}_{i} = \pm 1$. We allow
  this.

  For each pair $E^k$, $E^l$ of these curves the beginning of the
  binary sequence of Ls and Rs is the same and then they diverge. In
  the chain $\Gamma$ the vertices of the curves $E^k$ come in the
  increasing order of the fractions:
  \begin{math}
    \frac{w^1_{j}}{w^1_{i}} < \frac{w^2_{j}}{w^2_{i}}
    < \dotsb < \frac{w^N_{j}}{w^N_{i}}.
  \end{math}
\end{construction}

\begin{example}
  For two curves with the weight vectors $(4,3)$ and $(14,11)$ the
  final chain is $\Gamma = [2,2,3, \bm{1}, 4,2, \bm{1}, 3,5,1]$. The
  sequences are LRRL and LRRRLR, they diverge after LRR. Since
  $\frac34 < \frac{11}{14}$, the first curve is on the left.
\end{example}

Now let $f\colon Y\to \bP^2$ denote a surface obtained by blowing up
the four-line configuration in $\bP^2$ and $g\colon Y\to X$ a
contraction as in Construction~\ref{con:main}.
The following description is now obvious from the above.
The four initial marks are $-1 = -L_i^2$ for the lines
in the plane.  

\begin{theorem}\label{thm:dual-graph}
  Assume that $g\colon Y\to X$ does not contract any
  $(-1)$-curves, as can always be arranged.
  The dual graph of the visible curves on $Y$ is obtained by starting
  with a complete graph on four vertices with marks $(-1)$ and then
  for every edge $e_{ij}=\{v_i, v_j\}$ for which the point
  $P_{i} = L_i\cap L_j$ is blown up by $f$ grafting on top of
  the vertices $v_i$ (resp. $v_j$) the chains for the HJ
  fractions $\frac{w^k_{j}}{w^k_{i}}$ (resp. $\frac{w^k_{i}}{w^k_{j}}$)
  corresponding to the weight vectors
  $(w^k_{i}, w^k_{j}, 0, 0)$ for the curves $E^k$ over~$P_{ij}$.
  
\end{theorem}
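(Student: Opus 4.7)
The plan is to reduce the statement to the single-point situation already handled by Theorem~\ref{thm:farey-fractions} and Construction~\ref{con:many-curves}, applied independently over each of the six pairwise intersection points of the four lines. First I would record the initial data: since $L_1,\dots,L_4$ are in general position, they meet pairwise transversally at six distinct points $P_{ij}=L_i\cap L_j$, and the dual graph of $L_1+\dots+L_4$ on $\bP^2$ is the complete graph on $\{v_1,v_2,v_3,v_4\}$ with a single edge between each pair of vertices and with initial marks $-L_i^2=-1$.

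Next I would observe that the blowups over distinct points $P_{ij}$ are independent: a blowup in the preimage of $P_{ij}$ does not touch the preimage of any other $P_{kl}$. So I can analyze each edge $e_{ij}$ of the complete graph in isolation. Fixing the pair $(L_i,L_j)$, the situation is exactly Construction~\ref{con:single-curve}, with $S=\bP^2$ and $P=P_{ij}$, except that we now keep several visible curves as in Construction~\ref{con:many-curves}. By Theorem~\ref{thm:farey-fractions}, every visible curve $E^k$ appearing above $P_{ij}$ corresponds to a pair of coprime positive integers $(w^k_i,w^k_j)$ — its weight vector — and the chain of all visible curves produced by the blowups over $P_{ij}$ is described by the Farey procedure applied to these fractions.

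Then I would incorporate the contraction $g\colon Y\to X$ and translate the resulting picture into grafting. Under the hypothesis that $g$ contracts no $(-1)$-curve, each surviving $E^k$ over $P_{ij}$ keeps its self-intersection as computed on $Y$, and for each survivor the piece of the chain strictly between $E^k$ and $v_i$ on the $L_i$-side has determinants computed by Theorem~\ref{thm:farey-fractions}(1): this sub-chain corresponds to the HJ fraction $\frac{w^k_j}{w^k_i}$. This is exactly the data of a hair grafted onto $v_i$ in the sense introduced before Theorem~\ref{thm:hairy-graph}: the leading integer $n^k_{ij}$ in the expansion of $\frac{w^k_j}{w^k_i}$ gets added to the mark of $v_i$, and the tail fraction $\frac{r^k_{ij}}{q^k_{ij}}$ becomes the chain that is actually attached. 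Summing over all survivors above all six edges yields the final mark $-1+\sum_{j,k} n^k_{ij}$ at $v_i$, which matches $-L_i^2$ computed on $Y$.

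I expect the main obstacle to be purely bookkeeping: carefully matching "the piece of the chain between a survivor $E^k$ and the adjacent survivor $E^{k+1}$ (or the initial vertex $v_i$)" to the HJ expansion $\frac{p}{q} = n - \frac{r}{q}$ in the grafting convention, and in particular verifying that the cases where two adjacent weight vectors satisfy $w^k_i w^{k+1}_j - w^k_j w^{k+1}_i = \pm 1$ — so that no curve separates them — correspond precisely to an attached chain of length zero in the grafting picture. Once Theorem~\ref{thm:farey-fractions} and the Farey-sequence interpretation of Construction~\ref{con:many-curves} are in hand, this last verification is immediate by induction on the number of blowups, and the theorem follows.
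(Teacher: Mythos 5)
Your proposal is correct and follows exactly the route the paper intends: the paper gives no separate argument for Theorem~\ref{thm:dual-graph}, stating only that ``the following description is now obvious from the above,'' i.e.\ from Theorem~\ref{thm:farey-fractions} and Constructions~\ref{con:single-curve} and~\ref{con:many-curves} applied independently over each of the six nodes $P_{ij}$, which is precisely your reduction. Your elaboration of the grafting bookkeeping (and of the degenerate case where adjacent weight vectors differ by a Farey step) fills in the details the paper leaves implicit, so there is nothing to correct.
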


\begin{theorem}\label{thm:singularities}
  Let $\bm{\Gamma}$ be the graph with the vertices $v_i$
  ($i=1\dots 4$) which are not survivors.  The dual graph
  $\Gamma\dsing$ of the singularities of $X$ has two parts
  $\Gamma\dsing = \wh\Gamma \sqcup \Gamma^{\rm edge}$:
  \begin{enumerate}
  \item $\wh\Gamma$ is the hairy graph with the core graph
    $\bm{\Gamma}$ and the marks
    $\bm{n_i} = -1 + \sum_j \frac{w_{i}}{w_{j}}$, $1\le i\le4$, for
    the survivor (if it exists) \emph{closest to}~$v_i$ along the
    edge $\{v_i,v_j\}$.

  \item $\Gamma^{\rm edge}$ consists of the chains that lie entirely
    inside edges, when there are several survivors on the same edge.
\end{enumerate}
\end{theorem}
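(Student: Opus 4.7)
The plan is to read off $\Gamma\dsing$ directly from Theorem~\ref{thm:dual-graph}. Since the $g$-exceptional divisors are exactly the visible curves that are not survivors, the dual graph of the singularities of $X$ is the dual graph of all visible curves on $Y$ with the survivor vertices (and all incident edges) deleted.

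By Theorem~\ref{thm:dual-graph}, the dual graph on $Y$ is the complete graph $K_4$ on $\{v_1,\dots,v_4\}$ with marks $-1$, in which each blown-up edge $\{v_i,v_j\}$ has been replaced by a chain from $v_i$ to $v_j$ threading through the survivors over $P_{ij}$ in the Farey order of the ratios $\frac{w^k_j}{w^k_i}$. Deleting the survivor vertices cuts each such chain into pieces of three kinds: (a)~the segment starting at $v_i$ and ending just before the closest survivor on that edge (when $v_i$ is not itself a survivor); (b)~the symmetric segment at $v_j$; and (c)~segments strictly between two consecutive survivors on the same edge. Pieces of type~(c) are disjoint from all four $v_i$'s and form precisely $\Gamma\dedge$; pieces of types~(a) and~(b), together with the non-survivor $v_i$'s, assemble into the hairy graph $\wh\Gamma$ with core $\bm{\Gamma}$.

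To pin down the mark $\bm{n_i}$, I would apply Theorem~\ref{thm:farey-fractions}: the chain from $v_i$ to a closest survivor with weight vector $(w_i,w_j)$ has determinant $w_i$ and, after deleting $v_i$, determinant $w_j$, so it corresponds to the HJ fraction $\frac{w_j}{w_i}$. Once the survivor is removed, the remaining chain is a hair grafted onto $v_i$ with HJ fraction $\frac{q_{ij}}{p_{ij}}=\frac{w_j}{w_i}$, and by Theorem~\ref{thm:hairy-graph} it contributes $\frac{p_{ij}}{q_{ij}}=\frac{w_i}{w_j}$ to the effective mark at $v_i$. Adding the initial mark $-1$ (the negative self-intersection of a line in $\bP^2$) and summing over those edges $\{v_i,v_j\}$ that carry a closest survivor yields $\bm{n_i}=-1+\sum_j\frac{w_i}{w_j}$, as asserted. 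The remainder is combinatorial bookkeeping; the main pitfall is orientation, since the two sides of a given survivor produce reciprocal HJ fractions, so one must take care not to transpose $w_i$ and $w_j$ when declaring which hair sits on which vertex.
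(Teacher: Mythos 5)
Your proposal is correct and takes essentially the same route the paper does implicitly: Theorem~\ref{thm:singularities} is given no separate proof precisely because it amounts to deleting the survivor vertices from the dual graph of Theorem~\ref{thm:dual-graph} and reading off the resulting pieces as hairs and edge-chains via Theorems~\ref{thm:farey-fractions} and~\ref{thm:hairy-graph}, which is exactly what you carry out. Your orientation bookkeeping (the hair on $v_i$ corresponds to the HJ fraction $\frac{w_j}{w_i}$ and hence contributes $\frac{w_i}{w_j}$ to the effective mark) agrees with the paper's conventions.
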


The determinant of the graph $\wh\Gamma$ can be easily computed by
Theorem~\ref{thm:hairy-graph}. For the singularities in $\Gamma^{\rm
  edge}$, one has the following:

\begin{lemma}\label{lem:edge-det}
  Let $\Gamma$ be a chain between two visible curves on the same edge,
  with the weight vectors $(w_{i}, w_{j})$, $(w'_{i},w'_{j})$ such
  that $\frac{w_{j}}{w_{i}} < \frac{w'_{j}}{w'_{i}}$.  Then 
    \begin{displaymath}
      \det\Gamma =
      \begin{vmatrix}
        w_{i} & w_{j} \\
        w'_{i} & w'_{j} 
      \end{vmatrix}
      = 
      \left(\frac{w_{i}}{w_{j}} - \frac{w'_{i}}{w'_{j}} \right) 
      w_{j}w'_{j}
    \end{displaymath}
\end{lemma}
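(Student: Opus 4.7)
The plan is to label the full chain of $f$-exceptional curves lying between $E$ and $E'$ on $Y$ \emph{before} contracting any $(-1)$-curves, extract a three-term linear recurrence on their weight vectors, and show that $\det\Gamma$ equals the $2\times 2$ determinant of the weight vectors of $E$ and $E'$. Working on $Y$ rather than on the final chain is harmless: Lemma~\ref{lem:det-expansion} yields the elementary identity $|m_1,\dots,m_{k-1},1,m_{k+1},\dots,m_r|=|m_1,\dots,m_{k-1}-1,m_{k+1}-1,\dots,m_r|$, so contracting $(-1)$-curves inside the chain preserves the determinant. List the chain on $Y$ as $E=E^0,E^1,\dots,E^L=E'$ with weight vectors $u_k=(a_k,b_k)\in\bZ^2$ and $m_k=-(E^k)^2$, and reduce to computing $|m_1,\dots,m_{L-1}|$.

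The engine is the recurrence $m_k u_k=u_{k-1}+u_{k+1}$ for $1\le k\le L-1$. For every such interior $f$-exceptional curve $E^k$, the projection formula gives $f^*L_i\cdot E^k=0=f^*L_j\cdot E^k$. Since $E^k$ lies over the single point $P=L_i\cap L_j$ and is interior in the Farey chain over $P$, it is disjoint from the strict preimages of $L_i$ and $L_j$ and from every other visible curve, so its only dual-graph neighbours are $E^{k-1}$ and $E^{k+1}$; expanding $f^*L_i$ as the strict preimage of $L_i$ plus $\sum_m a_m E^m$ and intersecting with $E^k$ produces the recurrence componentwise. Moreover, Theorem~\ref{thm:farey-fractions} (via the Farey mediant procedure in its proof) shows that consecutive entries of any chain over $P$ are Farey-adjacent, so the $2\times 2$ determinant $D(u_0,u_1):=a_0b_1-b_0a_1$ equals $1$, with the sign fixed by the ordering hypothesis $\frac{w_j}{w_i}<\frac{w'_j}{w'_i}$. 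Pairing the recurrence with $u_0$ via $D(u_0,\cdot)$ and inducting on $k$ gives $D(u_0,u_k)=|m_1,\dots,m_{k-1}|$. Setting $k=L$ yields
\[
\det\Gamma=|m_1,\dots,m_{L-1}|=D(u_0,u_L)=w_iw'_j-w_jw'_i=\det\begin{pmatrix}w_i & w_j\\ w'_i & w'_j\end{pmatrix},
\]
which equals $\bigl(\tfrac{w_i}{w_j}-\tfrac{w'_i}{w'_j}\bigr)w_jw'_j$ after dividing through by $w_jw'_j$.

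The main obstacle is justifying the base case $D(u_0,u_1)=1$ cleanly in the paper's conventions: one must trace the Farey mediant procedure back to the initial pair $(e_i,e_j)$, whose determinant is $1$, and observe that inserting a mediant between two Farey-adjacent vectors $u,u'$ preserves Farey-adjacency of both new consecutive pairs since $D(u,u+u')=D(u+u',u')=D(u,u')$. Once this invariant is in place, the continuant identity $D(u_0,u_k)=|m_1,\dots,m_{k-1}|$ drops out by a two-line induction, and the invariance of chain determinants under contractions of $(-1)$-curves is an equally short calculation based on Lemma~\ref{lem:det-expansion}.
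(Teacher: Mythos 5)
Your proof is correct, but it takes a genuinely different route from the paper's. The paper disposes of this lemma in one line: by Theorem~\ref{thm:farey-fractions} the two halves of the chain over $P$ cut off by a survivor are the HJ chains for $\frac{w_{j}}{w_{i}}$ and $\frac{w_{i}}{w_{j}}$, so the segment between two survivors is formally a hairy graph with a single core vertex of mark $0$ and two grafted hairs for the fractions $\frac{w_{j}}{w_{i}}$ and $-\frac{w'_{j}}{w'_{i}}$, and the first formula of Theorem~\ref{thm:hairy-graph} then outputs $\bigl(\frac{w_{i}}{w_{j}}-\frac{w'_{i}}{w'_{j}}\bigr)w_{j}w'_{j}$ directly. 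You instead rebuild the arithmetic from scratch: the three-term recurrence $m_k u_k=u_{k-1}+u_{k+1}$ extracted from $f^*L_i\cdot E^k=0$ via the projection formula, the Farey-adjacency invariant $D(u_k,u_{k+1})=1$ propagated through the mediant insertions, and the continuant identity $D(u_0,u_k)=|m_1,\dotsc,m_{k-1}|$. All three steps are sound (in particular the sign $D(u_0,u_1)=+1$ is indeed pinned down by the ordering hypothesis, since the fractions increase from left to right as in Construction~\ref{con:many-curves}), so your argument is a valid, more elementary and self-contained alternative --- it essentially reproves the relevant part of Theorem~\ref{thm:farey-fractions} along the way --- at the cost of length, whereas the paper's version buys brevity by reusing machinery already set up. One small remark: your preliminary reduction about contracting interior $(-1)$-curves is unnecessary in the paper's setup, since $\Gamma$ is already the chain of curves contracted by $g$ on $Y$ and Theorem~\ref{thm:dual-graph} arranges that no $(-1)$-curves are contracted; the blow-down identity you invoke is nevertheless correct, so this extra step is harmless.
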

\begin{proof}
  This can be formally considered to be a special case of
  Theorem~\ref{thm:hairy-graph}, for a hairy graph with a single vertex
  $v_i$ marked 0 and two hairs for the HJ fractions
  $\frac{w_{j}}{w_{i}}$ and $-\frac{w'_{j}}{w'_{i}}$ attached to it.
\end{proof}

\section{A simple formula for $(K_X+B)^2$}
\label{sec:formulas}

We continue working with a pair $(X,\sum b_iB_i)$ as in
Construction~\ref{con:main}. Let $W$, $\oW$, $\hW$ be the weight
matrices defined in \eqref{def:weight-matrix},
\eqref{def:extended-weight-matrix}. In this Section we always assume
that $\rho(X)=1$. Thus, $W$ is $4\times 4$, $\oW$ is $4\times 3$, and
$\hW$ is $5\times 5$.

\begin{definition}
  We will denote by $W_i$ the $3\times 3$ submatrix of $\oW$ obtained
  by removing the $i$-th row. Note that $\det W_i$ is the same as the
  minor of the extended matrix $\hW$ one gets by removing the $i$-th
  row and the $5$th column.
\end{definition}

Let $\{F_s\}$ be the visible curves which are contracted by
$g\colon Y\to X$, and let $\Gamma\dsing$ be the dual graph of this
collection. The negative of the intersection form $(-F_i.F_j)$ is
positive definite. Let us denote by $\Delta$ its determinant. By
Theorem~\ref{thm:singularities} one has $\Delta = \det\Gamma\dsing =
\det\wh\Gamma \cdot \det\Gamma\dedge$, and these are computed by
\eqref{thm:hairy-graph}, \eqref{lem:edge-det}.

\smallskip

Since the lattice $\Pic Y$ is unimodular, the sublattice
$\la F_s\ra^\perp = \bZ H_Y$ is one dimensional, generated by a
primitive integral vector with $H_Y^2 = \Delta$. 
Let
$p\colon \Pic Y \to \bQ H_Y$ be the orthogonal projection. Its image
is $\bZ h_Y$, where $h_Y = H_Y/\Delta$ and $h_Y^2 = \frac1{\Delta}$.
Let $h:= g_* h_Y$. One has $h^2 = \frac1{\Delta}$. By changing the
sign if necessary, we can assume that $h$ is the ample generator of
$(\Pic X)\otimes\bQ$.

\begin{theorem}\label{thm:main-formulas}
  One has the following:
  \begin{enumerate}
  \item The numbers $\det W$ and $(-1)^i \det W_i$ for $1\le i\le 4$ 
    are all nonzero and have the same sign. 
  \item Permuting the rows of $W$ if necessary, we can assume that
    $\det W > 0$. Then in $(\Pic X)\otimes\bQ$ one has
    \begin{displaymath}
      g_* f^*(L_i) = \det W \cdot h
      \text{ and }
      B_i = (-1)^i \det W_i\cdot h \text{ for } 1\le i \le 4.
    \end{displaymath}
  \item Assuming $\det W>0$, one has $K_X + B = \det\hW \cdot h$. In
    particular, $K_X+B$ is ample, numerically zero, or antiample iff
    $\det\hW>0$, $\det\hW=0$, or $\det\hW<0$.
  \item 
    \begin{displaymath}
      (g_* f^*L_i)^2 = \frac{(\det W)^2}{\Delta}, \quad
      B_i^2 = \frac{(\det W_i)^2}{\Delta}, \quad
      (K_X + B)^2 = \frac{(\det\hW)^2}{\Delta}.
    \end{displaymath}
  \end{enumerate}
\end{theorem}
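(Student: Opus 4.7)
My plan is to derive parts (1) and (2) simultaneously from the relations $L_i\sim L_j$ on $\bP^2$ combined with Cramer's rule, then obtain (3) via a log-adjunction computation and (4) by squaring. Throughout I would work in $(\Pic X)\otimes\bQ=\bQ h$ with the fixed generator $h=g_* h_Y$, $h_Y=H_Y/\Delta$; a projection-formula argument (using $g^*h=h_Y$, which follows from $g^*h\cdot F_t=h\cdot g_* F_t=0$ for each contracted $F_t$) confirms $h^2=1/\Delta$.

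Since $f^*L_1,\dots,f^*L_4$ coincide in $\Pic Y$, writing $f^*L_i=\sum_E w_i(E)E$ over all visible curves and pushing forward by $g_*$ (which kills contracted $F_t$) yields both
\[
g_* f^*L_i = \sum_{s=1}^{4} w_i^s B_s \quad\text{and}\quad \sum_s (w_k^s-w_l^s)B_s = 0 \text{ in }(\Pic X)\otimes\bQ.
\]
Setting $B_s=\mu_s h$, the vector $(\mu_s)$ lies in the $1$-dimensional kernel of $\oW^T$, which the standard $4\times 4$ cofactor expansion (applied to a matrix with a repeated column) identifies as the span of $\bigl((-1)^s\det W_s\bigr)_s$; hence $\mu_s=\lambda(-1)^s\det W_s$ for a single universal scalar $\lambda$. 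Next I would prove the identity $\sum_s(-1)^s w_i^s\det W_s=\det W$ by recognising its left side as the determinant of the $4\times 4$ matrix obtained from $\oW$ by appending the $i$-th column of $W$, and reducing it to $\det W$ via elementary column operations with transition-matrix determinant $1$; this gives $g_* f^*L_i=\lambda\det W\cdot h$. Effectivity of $B_s$ and $g_* f^*L_i$ combined with ampleness of $h$ then forces all these scalars to be nonzero with a common sign, and after permuting rows of $W$ to enforce $\det W>0$ we get $\lambda>0$, which establishes part (1).

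The main obstacle will be pinning down $\lambda=1$. The orthogonal decomposition $E_s=\mu_s h_Y+\xi_s$ with $\xi_s\in\langle F_t\rangle_\bQ$ gives $\mu_s=E_s\cdot H_Y$, and similarly $\lambda\det W=f^*L_i\cdot H_Y$; both are integers. Expanding $H_Y$ in the unimodular basis $\{f^*L,E_j^{\mathrm{tot}}\}$ of $\Pic Y$, the coefficient of $f^*L$ equals both $f^*L\cdot H_Y=\lambda\det W$ and the integer $\deg f_* H_Y$ on $\bP^2$; the primitivity of $H_Y$ in $\Pic Y$ together with the kernel relations and the integrality of each $\mu_s$ forces this coefficient to equal exactly $\det W$, yielding $\lambda=1$ and completing part (2).

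For part (3), the log-canonical identity $K_{\bP^2}+L_1+L_2+L_3+L_4\sim L_1$ together with an induction on blowups (each blowup at a node of the reduced boundary is log-crepant, since $K_{Y'}+V_{Y'}=\pi^*(K_Y+V_Y)$) gives $K_Y+V_Y\sim f^*L_1$ with $V_Y=\sum_E E$ summed over all visible curves. The log pullback $g^*(K_X+B)=K_Y+\sum_E(1-c(E))E=f^*L_1-\sum_E c(E)E$, where $c(E)$ denotes the log discrepancy (so $c(E_s)=c_s=1-b_s$ for survivors and the contracted $F_t$ contributions vanish on pushforward), then pushes forward to
\[
K_X+B = g_* f^*L_1-\sum_i c_i B_i = \Bigl(\det W-\sum_{i=1}^4(-1)^i c_i\det W_i\Bigr)h.
\]
Laplace expansion of $\det\hW$ along its bottom row $(1,\dots,1)$ shows the bracketed quantity equals $\det\hW$, establishing (3). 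Part (4) is then immediate by squaring the identities of (2) and (3) and using $h^2=1/\Delta$.
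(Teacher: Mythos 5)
Your route to parts (1)--(2) is genuinely different from the paper's: you realize the vector $(\mu_s)$, where $B_s=\mu_s h$, as a kernel vector of $\oW^T$ and pin it down via the cofactor identity $\sum_s(-1)^s w_i^s\det W_s=\det W$ together with positivity ($B_s\cdot h>0$), whereas the paper computes each $\mu_s$ directly as the index of an explicit finite-index sublattice of the free $\bZ$-module $\Vis$ on the visible curves. Your derivation of the common-sign statement in (1) from effectivity and ampleness of $h$ is in fact more explicit than the paper's one-line remark, and your parts (3)--(4) (log adjunction $f^*(K_{\bP^2}+\sum L_i)=K_Y+\sum E$ followed by expansion of $\det\hW$ in its last column, then squaring against $h^2=1/\Delta$) coincide with the paper's argument.

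The one step that is asserted rather than proved is the normalization $\lambda=1$, and this is precisely where the content lies. Primitivity of $H_Y$, combined with the facts that the visible curves generate $\Pic Y$ and that the contracted ones satisfy $F_t\cdot H_Y=0$, gives exactly $\gcd(\mu_1,\dots,\mu_4)=1$, i.e.\ $|\lambda|\cdot\gcd_s|\det W_s|=1$; the integrality of $f^*L\cdot H_Y=\lambda\det W$ adds nothing beyond this. So your argument yields $\lambda=\pm1$ only once one knows $\gcd_s\det W_s=1$, which is equivalent to the torsion-freeness of $\bZ^4/\oW(\bZ^3)\cong\Pic Y/\la F_s\ra$, i.e.\ to $\la F_s\ra$ being saturated in $\Pic Y$. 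That saturation is the content of the setup's assertion $H_Y^2=\Delta$ (in general $H_Y^2=\Delta/d^2$ with $d$ the saturation index), and it is also what the paper's identification of $m_i$ with a lattice index quietly relies on. To close the gap you should either invoke that saturation explicitly and deduce $\gcd_s\det W_s=1$ from it, or replace your normalization step by the paper's computation of the index $[\Vis:\la F_s\ra+\bZ E_i+\la f^*(L_j-L_4)\ra]=|\det W_i|$, which produces the integer $m_i$ as a single $3\times3$ determinant. As written, ``primitivity forces this coefficient to equal exactly $\det W$'' states the desired conclusion without deriving it.
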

\begin{proof}
  (1) and (2). Let $g_*f^*L_4 = m h$. Then $m$ is the index of the sublattice
  $\la F_s\ra + f^*L_4$ in $\Pic Y$. This is the same as the index of
  the sublattice
  \begin{displaymath}
    \la F_s\ra + f^*L_4 + \la f^*(L_i-L_4) \ra  = 
    \la F_s\ra + \la f^*(L_i) \ra \subset \Vis,
  \end{displaymath}
  where $\Vis = \oplus\bZ E$ is the free $\bZ$-module generated by the
  visible curves. This, in turn is the same as the index of the
  sublattice
  \begin{displaymath}
    \la g_*f^* L_i, \ 1\le i \le 4\ra \subset \oplus_{i=1}^4 \bZ B_i
    \simeq \bZ^4,
  \end{displaymath}
  which is $|\det W|$ by definition. In particular, $\det W\ne 0$. 

  Similarly, if $B_i = m_i h$ then $m_i$ is the index of the
  sublattice $\la F_s\ra + g^*B_i + \la f^*(L_j-L_4) \ra$
   in
  $\Vis$. Up to a sign, this is the determinant of the matrix obtained
  from $\oW$ by removing the $i$-th row, i.e.\ $|\det W_i|$. The signs
  are easy to figure out: $B_i/B_j$ is the ratio of the corresponding
  cofactors $(-1)^i \det W_i$.

  \smallskip

  (3) We note that $f^*(K_{\bP^2}+ \sum_{i=1}^4 L_i)$ is
  $K_Y + \sum E$, where the sum goes over all the visible curves. Thus
  $K_X + \sum_{i=1}^4 B_i = g_*f^*L = \det W\cdot h$. Here, $\det W$
  is the determinant of the extended matrix $\hW(0)$ with the last
  column entries being the log discrepancies $0$. Since $B_i=m_i h$
  with $m_i$ the cofactors of $\hW$ for the $(i,5)$-entry, for the
  pair with arbitrary log discrepancies $c_i=1-b_i$ we get
  $K_X + B = \widehat{m}h$, where $\widehat{m}$ is the determinant of the
  extended matrix $\hW(c_i)$ with the entries $c_i$ in the last
  column, except of course the $(5,5)$-entry is 1.
  Part (4) follows since $h^2=\frac1{\Delta}$.
\end{proof}

\begin{remark}
  Of course $\det\hW$ can also be computed as the determinant of the
  $4\times 4$ matrix $W - (c_1,c_2,c_3,c_4)^t \cdot (1,1,1,1)$.
\end{remark}

\begin{example}
  Consider the surface $Y$ with the visible curves as in the figure
  below. The extended weight matrix for the divisor $K_X$ is written on
  the right. 

  \begin{minipage}[c]{0.49\textwidth}
    \centering
    \includegraphics{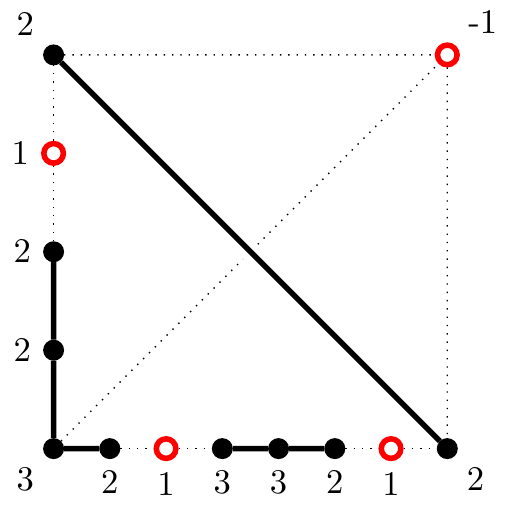}
  \end{minipage}
  \begin{minipage}[c]{0.49\textwidth}
    \centering
    \begin{displaymath}
      \hW =
      \left(
        \begin{array}{cccc|c}
        1 & 3 & 0 & 0 & 1\\
        0 & 0 & 1 & 0 & 1\\
        5 & 0 & 0 & 2 & 1\\
        1 & 0 & 0 & 3 & 1\\
        \hline
        1 & 1 & 1 & 1 & 1
        \end{array}
      \right)
    \end{displaymath}
  \end{minipage}

  \medskip The determinants of the weight matrices are 
  $\det W=39$, $((-1)^i\det W_i) = (13,39,3,11)$,
  $\det\hW = -27$. For the singularities:
  $\Delta_1= |-1+\frac13+\frac52| \cdot 3\cdot 2= 11$,
  $\Delta_2= |-1+\frac31, -1+\frac31| \cdot 1\cdot 1= 3$,
  $\Delta_3 = |\frac52-\frac13| \cdot 2\cdot 3 = 13$. Thus, $-K_X$ is
  ample and $K_X^2 = \frac{(-27)^2}{11\cdot 3\cdot 13} = \frac{243}{143}$.
\end{example}


The first formula of Theorem~\ref{thm:hairy-graph} is a very efficient
way to compute $\det \wh\Gamma$ as the determinant of an at most
$4\times 4$ matrix. The formula for $\det \Gamma\dedge$ in
~\eqref{lem:edge-det} is also very simple.
However, the following Lemma is still of an independent interest.

\begin{lemma}\label{lem:det-sing-12}
  Assume that the four survivors are on the edges and correspond to
  the weight vectors $(w^k_{i}, w^k_{j})\in\bZ_i\times\bZ_j$, $k=1,\dotsc,4$,
  and $(i,j)$ depend on $k$. 
  Then
  \begin{displaymath}
    \det \Gamma\dsing =
    \det \bm{\wt\Gamma} \cdot
    \prod_{k=1}^4 w^k_{i}w^k_{j}, 
  \end{displaymath}
  where $\bm{\wt\Gamma}$ is a graph on 12=4+8 vertices, as follows:
  \begin{enumerate}
  \item The first 4 vertices $v_i$ correspond to the lines $L_i$ and have
    marks $-1$. 
The two vertices $v_i,v_j$ are connected by an edge $e_{ij}$
    iff there are no survivors on this edge, i.e. if the point
    $L_i\cap L_j$ is not blown up by $f\colon Y\to\bP^2$. 
  \item Each of the survivors $E^k$ on the edge $e_{ij}$ gives two vertices with the marks
    $-w^k_j/w^k_i$ and $-w^k_i/w^k_j$. The vertex $v_i$ is
    connected to the vertex with the mark $-w^k_j/w^k_i$ for
    the survivor closest to $v_i$. Then the
    vertex with the mark $-w^k_i/w^k_j$ is connected to the next
    survivor on the edge $e_{ij}$ if it exists, or to the vertex $v_j$ if it does not.
  \end{enumerate}
  By clearing the denominators, this gives an expression for
  $\Gamma\dsing$ as the determinant of a $12\times 12$-matrix with 4
  rows with constant entries and 8 rows in which entries are 0 or
  $w^k_i$. By elementary column operations, it can be reduced to a
  determinant of an $8\times 8$ matrix whose entries are linear
  functions of $w^k_i$.
\end{lemma}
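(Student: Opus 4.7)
The plan is to combine Theorem~\ref{thm:singularities}, the second formula of Theorem~\ref{thm:hairy-graph}, and Lemma~\ref{lem:edge-det}. First, by Theorem~\ref{thm:singularities} I would write $\det \Gamma\dsing = \det \wh\Gamma \cdot \det \Gamma\dedge$. Applying the second formula of Theorem~\ref{thm:hairy-graph} to $\wh\Gamma$: for each edge $\{v_i, v_j\}$ with $m \ge 1$ survivors $E^{k_1}, \ldots, E^{k_m}$ (ordered from $v_i$ toward $v_j$), only the endpoint survivors produce hairs, yielding an auxiliary vertex $u_i^{k_1}$ attached to $v_i$ with mark $-w_j^{k_1}/w_i^{k_1}$ and factor $-w_i^{k_1}$, and an auxiliary vertex $u_j^{k_m}$ attached to $v_j$ with mark $-w_i^{k_m}/w_j^{k_m}$ and factor $-w_j^{k_m}$. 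Then by Lemma~\ref{lem:edge-det}, each pair of consecutive survivors $E^{k_t}, E^{k_{t+1}}$ on a multi-survivor edge contributes a factor $w_i^{k_t} w_j^{k_{t+1}} - w_j^{k_t} w_i^{k_{t+1}}$ to $\det \Gamma\dedge$.

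The key structural observation is that $\bm{\wt\Gamma}$ decomposes as a disjoint union. On each multi-survivor edge, each interior gap $\{u_j^{k_t}, u_i^{k_{t+1}}\}$ (with marks $-w_i^{k_t}/w_j^{k_t}$ and $-w_j^{k_{t+1}}/w_i^{k_{t+1}}$) is a connected $2$-vertex component with no other neighbors in $\bm{\wt\Gamma}$, since the stated construction gives each of its two vertices exactly one neighbor, namely the other. Deleting all such gap components from $\bm{\wt\Gamma}$ leaves precisely the graph obtained by adjoining to the core $\bm\Gamma$ (with integer marks $-1$) one auxiliary vertex per endpoint hair, as described in the previous paragraph. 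A direct $2 \times 2$ computation gives
\[
  \det\{u_j^{k_t}, u_i^{k_{t+1}}\} = \frac{w_i^{k_t} w_j^{k_{t+1}} - w_j^{k_t} w_i^{k_{t+1}}}{w_j^{k_t} w_i^{k_{t+1}}}.
\]

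Multiplying together the pieces, the claim $\det \Gamma\dsing = \det \bm{\wt\Gamma} \cdot \prod_{k=1}^4 w_i^k w_j^k$ reduces, on each multi-survivor edge with $m \ge 1$ survivors, to the rearrangement
\[
  w_i^{k_1} w_j^{k_m} \cdot \prod_{t=1}^{m-1} w_j^{k_t} w_i^{k_{t+1}} \;=\; \prod_{t=1}^m w_i^{k_t} w_j^{k_t},
\]
which is immediate. Finally, the $12 \times 12$ and $8 \times 8$ matrix reformulation at the end of the statement is routine: scaling each hair-vertex row of the matrix of $\bm{\wt\Gamma}$ by its denominator yields the integer $12\times12$ matrix of the stated shape, and using the four constant-entry $v_i$ rows to eliminate the four core columns by elementary operations produces an $8 \times 8$ matrix whose entries are linear in the $w_i^k$. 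The main nontrivial step is the disconnectedness observation in the second paragraph, which will follow directly from inspecting the stated construction of $\bm{\wt\Gamma}$.
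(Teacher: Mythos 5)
Your proposal is correct and follows exactly the route the paper intends: its proof is the one-line remark that the lemma ``follows immediately from the second formula of Theorem~\ref{thm:hairy-graph}'', and your argument is a faithful expansion of that, combining Theorem~\ref{thm:singularities}, the second formula of Theorem~\ref{thm:hairy-graph} for the endpoint hairs, and Lemma~\ref{lem:edge-det} for the interior gaps. The disconnectedness of the gap components and the telescoping identity $w_i^{k_1} w_j^{k_m} \prod_{t=1}^{m-1} w_j^{k_t} w_i^{k_{t+1}} = \prod_{t=1}^m w_i^{k_t} w_j^{k_t}$ are exactly the bookkeeping the paper leaves implicit, and you have verified them correctly.
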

\begin{proof}
  This follows immediately from the second formula of
  Theorem~\ref{thm:hairy-graph}.
\end{proof}

\begin{example}
  For the case 3 of Theorem~\ref{thm:6-cases} below,
  $\det\Gamma\dsing$ is the determinant of the following $12\times 12$
  matrix, in which for convenience the call $w^k_i$ by $a_i$, $b_i$, $c_i$,
  $d_i$ and the dots denote zeros. Also, since all the entries in the
  matrix $M$ for the graph $\bm{\wt\Gamma}$ in \eqref{lem:det-sing-12} are
  nonpositive and $\det(-M)=\det(M)$, we use $-M$.

  \medskip
  
  \begin{displaymath}
    \left|
    \begin{array}{cccccccccccc}
      1 & . & 1 & . & 1 & . & . & . & 1 & . & . & . \\
      . & 1 & . & 1 & . & . & . & 1 & . & . & 1 & . \\
      1 & . & 1 & 1 & . & . & . & . & . & . & . & 1 \\
      . & 1 & 1 & 1 & . & . & . & . & . & 1 & . & . \\
      a_1 & . & . & . & a_2 & . & . & . & . & . & . & . \\
      . & . & . & . & . & a_1 & a_2 & . & . & . & . & . \\
      . & . & . & . & . & b_1 & b_2 & . & . & . & . & . \\
      . & b_2 & . & . & . & . & . & b_1 & . & . & . & . \\
      c_1 & . & . & . & . & . & . & . & c_4 & . & . & . \\
      . & . & . & c_4 & . & . & . & . & . & c_1 & . & . \\
      . & d_2 & . & . & . & . & . & . & . & . & d_3 & . \\
      . & . & d_3 & . & . & . & . & . & . & . & . & d_2
    \end{array}
    \right|
  \end{displaymath}
\end{example}

\newpage

\section{Minimal volumes of surfaces with ample $K_X$} 
\label{sec:min-volumes}

We keep the notations of Construction~\ref{con:main}.

\begin{lemma}\label{lem:coef-1}
  Assume that $(X,B)$ is log canonical and that for one of the curves
  $b_i=1$. Then $B_i$ is an image of $g_*\inv L_i$, i.e.\ of one of the
  four original lines in $\bP^2$.
\end{lemma}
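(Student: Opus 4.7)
The plan is to combine adjunction on the smooth surface $Y$ with the neighbour count supplied by Theorem~\ref{thm:dual-graph}. The key dichotomy is the following: if $B_i=g_*^{-1}L_j$ is the strict transform of one of the four original lines, then in the dual graph of visible curves on $Y$ it is the vertex $v_j$, which has exactly three neighbours, one in each direction towards $v_k$ for $k\neq j$. By contrast, if $B_i$ is an exceptional divisor $E^k$ over some point $P_{jk}$, then by Theorem~\ref{thm:dual-graph} it lies in the interior of a single chain over $P_{jk}$ and has only two neighbours. Moreover, all visible curves on $Y$ are smooth $\bP^1$'s meeting transversely in at most one point, so every non-zero intersection number among them equals $1$.

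Using the discrepancy identity
\[
K_Y + \sum_{i} b_i B_i^Y \;=\; g^*(K_X+B) + \sum_{s} a_s F_s
\]
(with $F_s$ the contracted visible curves and $a_s\ge -1$ their discrepancies), I would intersect with $B_i$, apply $K_Y\cdot B_i=-2-B_i^2$ from adjunction on $Y$, and use $b_i=1$ to cancel the two $B_i^2$ contributions, obtaining
\[
g^*(K_X+B)\cdot B_i \;=\; -2 + \sum_{j\neq i} b_j (B_i\cdot B_j) + \sum_{s} (-a_s)(F_s\cdot B_i).
\]
Only the two or three neighbours of $B_i$ contribute on the right, each intersection number that appears equals $1$, and each relevant coefficient is $\le 1$ (either $b_j\le 1$ for a survivor neighbour, or $-a_s\le 1$ for a contracted neighbour by log canonicity). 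Hence the right-hand side is bounded above by $-2$ plus the number of neighbours of $B_i$, i.e.\ by $0$ in the exceptional case and by $1$ in the line case.

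Invoking the standing hypothesis of Section~\ref{sec:min-volumes} that $K_X+B$ is ample, the left-hand side is strictly positive, which excludes the exceptional case and forces $B_i=g_*^{-1}L_j$ for some $j$.

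The main obstacle I anticipate is that the lemma as written has only log canonicity as a hypothesis, whereas the argument above genuinely needs $(K_X+B)\cdot B_i>0$: otherwise one can manufacture log-canonical pairs with an exceptional survivor of coefficient $1$ by taking $K_X+B$ anti-ample. This positivity is provided by the section's running assumption; granting it, the remainder is a routine combination of adjunction with the graph-theoretic bookkeeping of Theorem~\ref{thm:dual-graph}.
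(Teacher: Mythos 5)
Your proof is correct and is essentially the paper's argument: the paper applies adjunction on $X$ to the normalization $B_i^\nu$ via the different (whose coefficients are $\le 1$ by log canonicity) and concludes that positivity of $\deg\bigl((K_X+B)|_{B_i^\nu}\bigr)=-2+\sum d_k$ forces at least three points in the different, hence a corner vertex; your computation on $Y$ is exactly the resolution-level version of that same adjunction-plus-neighbour-count. Your observation that the positivity $(K_X+B)\cdot B_i>0$ coming from the section's ampleness assumption is genuinely needed is also implicitly used in the paper's step ``one must have at least 3 points.''
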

\begin{proof}
  Let $B_i^\nu$ be the normalization of $B_i$. By adjunction,
  $(K_X+B)|_{B_i^\nu} = K_{B_i^\nu} + \Diff$, where
  $\Diff = \sum d_k P_k$ is the different. Since $(X,B)$ is log
  canonical, $d_k\le1$. Since $B_i=\bP^1$, $\deg K_{B_i^\nu}=-2$ and
  one must have at least 3 points $P_k$. Thus, $B_i$ comes from a
  corner vertex in the graph, i.e.\ from one of the $L_i$'s.
\end{proof}

\begin{lemma}\label{lem:coef-0}
  Assume that $\rho(X)=1$.  Let $B_i$ be not a corner, i.e.
  $B_i\ne g_*f_*\inv L_i$.  Let $B' := B_i + \sum_{j\ne i} b_jB_j$.
  Then the pair $(X,B')$ is \emph{not} log canonical at at least one
  point of $X$ lying on $B_i$.
\end{lemma}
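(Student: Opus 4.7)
The lemma is essentially the contrapositive of Lemma~\ref{lem:coef-1} applied to the boundary $B'$ (whose $B_i$-coefficient is $1$), so I would adapt that lemma's adjunction argument to the present setting. Suppose, for contradiction, that $(X,B')$ is log canonical at every point of $X$ lying on $B_i$.

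Since $B_i$ is not a corner, its weight vector has exactly two nonzero entries, supported on some pair of indices $\{i,j\}$. By Theorem~\ref{thm:dual-graph}, the strict transform $\tilde B_i$ sits interior to the chain along the edge $e_{ij}$ of the core graph in the dual graph of visible curves on $Y$, and therefore has exactly two neighbors $N_1,N_2$. (A corner $v_k$ would have three neighbors, one per edge of the core incident to it, whereas every visible curve introduced by a blowup lies in a single chain along a single edge, hence has only two.) Writing
\[
  g^{*}(K_X+B') \;=\; K_Y + \tilde B_i + \sum_{k\ne i} b_k\,\tilde B_k + \sum_s c_s E_s,
\]
where $c_s$ is the codiscrepancy of the contracted visible curve $E_s$ with respect to $(X,B')$, restricting to $\tilde B_i$ and using adjunction together with $\tilde B_i\cong\PP^1$ yields
\[
  (K_X+B') \cdot B_i \;=\; -2 + \beta_{N_1} + \beta_{N_2},
\]
where $\beta_{N_r}$ equals either the coefficient $b_k$ of a survivor neighbor $\tilde B_k$ or the codiscrepancy of a contracted neighbor. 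Log canonicity forces each $\beta_{N_r}\le 1$, giving the upper bound $(K_X+B')\cdot B_i \le 0$.

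To derive a contradiction, I would invoke the standing assumption of Section~\ref{sec:min-volumes} that $K_X+B$ is ample. The identity $K_X+B' = (K_X+B) + (1-b_i)\,B_i$ expresses $K_X+B'$ as the sum of an ample divisor and a nonnegative multiple of $B_i$; since $\rho(X)=1$ and $B_i$ is numerically a positive multiple of the ample generator $h$ (by Theorem~\ref{thm:main-formulas}), the sum is ample, so $(K_X+B')\cdot B_i>0$, contradicting the upper bound. Because $(X,B)$ is log canonical and $B'$ differs from $B$ only in the coefficient of $B_i$, any failure of log canonicity of $(X,B')$ must occur at a point of $X$ lying on $B_i$, as claimed.

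The main obstacle is verifying the ``exactly two neighbors'' count: this follows from Theorem~\ref{thm:dual-graph} together with the observation that blowing up a point on $B_i$ (necessarily an intersection of $B_i$ with another visible curve) merely inserts a new vertex between $\tilde B_i$ and one of its existing neighbors, so the count of two is preserved throughout the blowup process.
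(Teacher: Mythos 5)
Your proof is correct and follows essentially the same route as the paper's: adjunction along $B_i$, the observation that a non-corner survivor has (at most) two points in its different because it is an interior vertex of a chain on a single edge, and the positivity $(K_X+B')\cdot B_i>0$ coming from $\rho(X)=1$ together with the ampleness of $K_X+B$. The only differences are cosmetic — you run the argument by contradiction and compute the adjunction on $Y$ via the pullback, whereas the paper argues directly with the different on the normalization $B_i^\nu$ — and you usefully make explicit the neighbor count and the ampleness hypothesis that the paper leaves implicit.
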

\begin{proof}
  Since $B'\ge B$ and $\rho(X)=1$, one has $(K_X+B')B_i > 0$.
  Adjunction gives $(K_X+B')_{B_i^\nu} = K_{B_i^\nu} + \sum d_kP_k$
  which has degree $-2+\sum d_k$.  If $B_i$ is not a corner then on the
  normalization $B_i^\nu$ there are at most two points $P_k$. So for one
  of them $d_k>1$ and the pair $(X,B')$ is not log canonical at that
  point.
\end{proof}

\begin{corollary}\label{cor:no-middle-survivor}
  For a log canonical pair $(X,B)$ with $\rho(X)=1$, one can not have
  three survivors on the same edge.
\end{corollary}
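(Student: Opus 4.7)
My plan is to focus on the middle survivor and show it cannot coexist with the log canonicity hypothesis. Suppose for contradiction that three survivors $B_{i_1}, B_{i_2}, B_{i_3}$ lie on the same edge $e_{jk}$ in order along the chain, with $B_{i_2}$ in the middle. Since $B_{i_2}$ is an exceptional curve lying over the intersection $L_j \cap L_k$, it cannot coincide with the strict transform of any line $L_l$, so $B_{i_2}$ is not a corner.

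I would first apply Lemma~\ref{lem:coef-0} with $B_i = B_{i_2}$. This yields that the pair $(X, B')$ obtained from $(X, B)$ by raising the coefficient $b_{i_2}$ to $1$ fails to be log canonical at some point $P \in B_{i_2}$. When $b_{i_2} = 1$ already, $B$ and $B'$ coincide, and this directly contradicts log canonicity; equivalently, Lemma~\ref{lem:coef-1} rules out this case since a middle survivor on an edge is never the image of any line $L_l$.

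For the remaining case $b_{i_2} < 1$, I plan to replicate the adjunction argument of Lemma~\ref{lem:coef-0} directly on $(X, B)$. Since $B_{i_2}$ is not a corner, the normalization $B_{i_2}^\nu \cong \mathbb{P}^1$ admits at most two special points, one on each side of $B_{i_2}$ along the chain on $e_{jk}$. In the adjunction $(K_X + B)|_{B_{i_2}^\nu} = K_{B_{i_2}^\nu} + \Diff$, the ``diffuse'' contribution $-(1 - b_{i_2}) B_{i_2}|_{B_{i_2}^\nu}$ has nonpositive degree, using $B_{i_2}^2 > 0$ from $\rho(X) = 1$. Thus the point-supported part of $\Diff$ has total degree at least $(K_X + B) B_{i_2} + 2 > 2$ under the ample setup of this section. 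With at most two special points and each coefficient bounded by $1$ under log canonicity, the total cannot exceed $2$, a contradiction.

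The main technical obstacle I foresee is making precise the decomposition of $\Diff$ into point-supported and diffuse pieces in the case $b_{i_2} < 1$, and verifying that each local coefficient is indeed controlled by the log canonical bound. The key geometric inputs are the ampleness of $K_X + B$ (yielding $(K_X + B) B_{i_2} > 0$) and the positivity $B_{i_2}^2 > 0$ from $\rho(X) = 1$, both automatic in the section's setup.
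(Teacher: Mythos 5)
There is a genuine gap in your main case $b_{i_2}<1$ (which is the relevant one, e.g.\ for $B=0$). Your contradiction rests on the claim that each coefficient of the point-supported part of the different on $B_{i_2}^\nu$ is bounded by $1$ ``under log canonicity.'' But the different you are bounding is $\Diff_{B_{i_2}^\nu}(B-b_{i_2}B_{i_2})$, computed with respect to $B_{i_2}$ taken with coefficient $1$; by adjunction/inversion of adjunction its coefficients are $\le 1$ precisely when the pair $(X,B')=(X,B_{i_2}+\sum_{j\ne i_2}b_jB_j)$ is log canonical near $B_{i_2}$. That is \emph{not} implied by log canonicity of $(X,B)$ with $b_{i_2}<1$, and it is exactly what Lemma~\ref{lem:coef-0} says fails at some point of $B_{i_2}$. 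So as written, your degree count does not contradict the hypothesis; it merely re-derives the conclusion of Lemma~\ref{lem:coef-0}, and the argument is circular.

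The missing ingredient — and the entire content of the paper's proof — is the explicit classification of log canonical surface germs recalled in Section~\ref{sec:singularities}: a reduced boundary component attached to the end of a chain (cyclic quotient) singularity is log canonical. For a \emph{middle} survivor $B_{i_2}$ on an edge, the curves contracted on either side of it are sub-chains of the edge, so at each of the (at most two) special points of $B_{i_2}$ the germ of $(X,B')$ is a chain with $B_{i_2}$ attached at one end (and possibly the neighboring survivor, with coefficient $\le 1$, at the other), hence log canonical. This holds at every point of $B_{i_2}$ and directly contradicts Lemma~\ref{lem:coef-0}, with no case division on $b_{i_2}$ and no further adjunction computation needed. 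Your treatment of the case $b_{i_2}=1$ via Lemma~\ref{lem:coef-1} is fine, but to close the case $b_{i_2}<1$ you must invoke this classification (or otherwise prove that $(X,B')$ is lc along $B_{i_2}$) rather than appeal to log canonicity of $(X,B)$.
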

\begin{proof}
  For the middle survivor the pair $(X,B')$ of the previous lemma is
  log canonical by the classification we recalled in the introduction,
  since the singularities on both sides correspond to chains.
\end{proof}

\begin{theorem}\label{thm:6-cases}
  For the set $\cS_0=\{0\}$, i.e.\ for the log canonical surfaces $X$
  with ample $K_X$, there are 6 possibilities for the position of the
  survivors in the graph, given in Fig.~\ref{fig:6-cases}. In
  particular, all the survivors are on the edges, and none of them are
  in the corners.
\end{theorem}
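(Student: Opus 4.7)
The plan is to classify the configurations in three stages: eliminate corner survivors, enumerate the possible placements of the four survivors on edges, and prune those that cannot be realized by a log canonical $X$ with ample $K_X$.

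First I would rule out corner survivors by contradiction, organized by the number $k$ of corners that are survivors. The easiest case is $k=4$: then $W=I_4$, and the formula from the remark following Theorem~\ref{thm:main-formulas} gives $\det\hW=\det(I_4-J_4)=-3$, so by Theorem~\ref{thm:main-formulas}(3) $-K_X$ is ample, contradicting the hypothesis. For $k=1,2,3$ the non-surviving corners become fork vertices in the contracted dual graph, with branches terminating at the nearest survivor. In each sub-case I would check three constraints: positive-definiteness of the intersection form on the contracted curves (needed for the contraction to exist), agreement with the classification of log canonical singularities recalled in Section~\ref{sec:singularities} (chains, $D_n$, $E_n$, $\tilde D_n$, $\tilde E_n$, cycles), and the ampleness inequality $\det\hW>0$. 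A systematic examination of the weight-vector possibilities, using the grafted-fraction description from Theorem~\ref{thm:singularities} and the hairy-graph determinant from Theorem~\ref{thm:hairy-graph}, shows that at least one of these constraints fails for every placement that includes a corner.

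Second, once all survivors are on edge chains, Corollary~\ref{cor:no-middle-survivor} limits each chain to at most two survivors. I would then enumerate multisets of four edges drawn from the six edges of $K_4$ with multiplicity at most two, modulo the $S_4$-action permuting the lines, producing a short finite list of orbits. For each orbit I would test realizability, eliminating those for which no admissible weight vectors give simultaneously a log canonical contracted configuration and $\det\hW>0$. The remaining orbits are the six configurations of Figure~\ref{fig:6-cases}.

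The main obstacle is the first stage. Lemma~\ref{lem:coef-0} rules out only non-corner coefficients equal to $1$; when $B=0$ it merely says $(X,B_i)$ is not lc, which is compatible with $X$ itself being lc, so it does not forbid a corner survivor. Likewise Corollary~\ref{cor:no-middle-survivor} says nothing about corners. Consequently the corner-exclusion has to be assembled from the explicit determinants of Theorem~\ref{thm:main-formulas} combined with hands-on analysis of the contracted graphs, and this is where the bulk of the case-work lies.
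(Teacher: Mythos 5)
Your proposal is essentially the paper's own argument: the paper's proof of this theorem is exactly such a finite enumeration of survivor placements constrained by Corollary~\ref{cor:no-middle-survivor} and by the log canonical classification of the contracted dual graphs, yielding eight cases of which the all-corners one ($\bP^2$, where your computation $\det\hW=\det(I_4-(1,1,1,1)^t(1,1,1,1))=-3<0$ via Theorem~\ref{thm:main-formulas}(3) is correct) and a numerically trivial one are discarded, matching your stages two and three. Your diagnosis is also accurate that Lemma~\ref{lem:coef-0} and Corollary~\ref{cor:no-middle-survivor} do not by themselves exclude corner survivors, so that the corner exclusion rests on the case analysis — but the paper leaves that same analysis at the level of ``straightforward enumeration,'' so your plan matches it in both structure and level of detail.
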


\begin{figure}[h!]
  \centering
  \includegraphics[width=\textwidth]{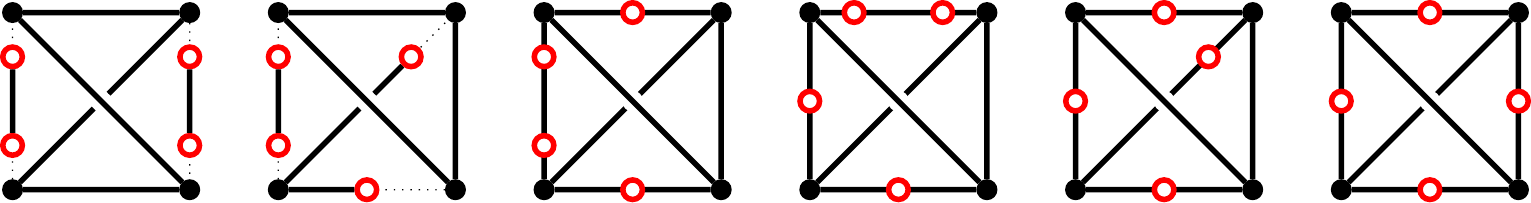}
  \caption{Log canonical surfaces with ample $K_X$}
  \label{fig:6-cases}
\end{figure}

\begin{proof}
  This is a straightforward enumeration of the cases. There are only 8
  cases satisfying Corollary~\ref{cor:no-middle-survivor}. One of them
  is $\bP^2$ with 4 lines, so that $-K_X$ is ample, and another one has
  $K_X=0$. The 6 listed cases are all legal and do appear. 
\end{proof}

\begin{lemma}\label{lem:survivor}
Let $f\colon Y\rightarrow \PP^2$ be a sequence of blow-ups over the nodes of four lines $L=\sum L_i$, and $g\colon Y\rightarrow X$ the contraction of some visible curves, including $f^{-1}_*L$ but not any of the $(-1)$-curves, such that $K_X$ is log canonical and ample. Then each survivor is the image of a $(-1)$-curve on $Y$.
\end{lemma}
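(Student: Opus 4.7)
The plan is to combine Theorem~\ref{thm:6-cases} -- which enumerates the six possible positions of the survivors in the dual graph (all on edges, none at corners) -- with the hypothesis that $g$ contracts no $(-1)$-curves and with a direct adjunction computation for $(-1)$-curve survivors.

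First I observe that every edge $e_{ij}$ which has been blown up carries at least one $(-1)$-curve in its final Farey chain: the exceptional divisor produced by the last blowup performed on that edge has self-intersection $-1$, since by construction it was not further blown up. As $g$ cannot contract it, this $(-1)$-curve must itself be a survivor. In particular, on any edge carrying only one survivor, that survivor is the $(-1)$-curve, and the lemma is immediate in this case.

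Next I treat the edges carrying two survivors via an adjunction constraint. If $E^k$ is a $(-1)$-curve survivor and $B_k = g_* E^k$, then combining adjunction on $Y$ with $g^*K_X = K_Y - \sum a_s F_s$ and the projection formula yields
\begin{displaymath}
  K_X\cdot B_k \;=\; -1 - \sum_{F\sim E^k,\,F\text{ contracted}} a_F.
\end{displaymath}
Ampleness of $K_X$ forces the right-hand side to be strictly positive, whereas log canonicity gives $a_F \ge -1$ for each adjacent contracted curve $F$. These two conditions together are incompatible with $E^k$ having only one contracted neighbour, so both of $E^k$'s two dual-graph neighbours must be contracted. Consequently no two survivors on the same edge can be adjacent whenever one of them is a $(-1)$-curve.

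Finally I argue by contradiction. Suppose some survivor $E^k$ on an edge $e_{ij}$ satisfies $(E^k)^2\le -2$. Tracing the chain of visible curves inserted by the post-creation blowups adjacent to $E^k$, one reaches a $(-1)$-curve $S$ in the final chain on that side; by the first step $S$ is a survivor, and by Corollary~\ref{cor:no-middle-survivor} it is the unique second survivor on the edge. By the adjacency constraint from the second step, $S$ cannot be adjacent to $E^k$, which pins down the chain to an asymmetric Farey shape $L_i-\cdots-E^k-\cdots-S-\cdots-L_j$ with contracted $(\le -2)$-curves between $E^k$ and $S$. The main obstacle is then to rule out this configuration globally: one checks against Figure~\ref{fig:6-cases}, using in particular that the four lines $L_1,\dots,L_4$ must themselves be contractible into negative-definite configurations (so their sub-configuration on the un-blown edges cannot form a $\tilde A_n$ cycle of $(-2)$-curves). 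No case in the classification admits such an edge, so $(E^k)^2=-1$ and $E^k$ is a $(-1)$-curve on $Y$.
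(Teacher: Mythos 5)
Your steps 1 and 2 are sound: every blown-up edge carries an exceptional $(-1)$-curve which must survive, and the adjunction computation $K_X\cdot B_k=-1-\sum a_F$ together with $a_F\ge -1$ and ampleness does force both chain-neighbours of a $(-1)$-curve survivor to be contracted. The reduction in step 3 to the configuration $L_i-\cdots-E^k-\cdots-S-\cdots-L_j$ (two non-adjacent survivors, $E^k$ with $(E^k)^2\le -2$ and $S$ a $(-1)$-curve) is also essentially correct, using Corollary~\ref{cor:no-middle-survivor}. The gap is the last step. Figure~\ref{fig:6-cases} and Theorem~\ref{thm:6-cases} record only the \emph{positions} of the survivors (which edges carry one or two of them); they carry no information about self-intersections of survivors or about the chains of contracted curves between two survivors on the same edge, so ``checking against the classification'' cannot eliminate the configuration, and the remark about $\widetilde A_n$ cycles of lines is irrelevant to this local question. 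Nor do your adjunction inequalities at the two survivors finish the job by themselves: for instance with $E^k=(1,1)$, contracted $(-2)$-curve $(1,2)$ between the survivors, $S=(2,5)$ and $(1,3)$ contracted on the far side, one gets $K_X\cdot B_{E^k}>0$ automatically and $K_X\cdot B_S=1-c_{(1,2)}-c_{(1,3)}=\tfrac13-c_{(1,3)}$, whose sign depends on the log discrepancy of a curve in a \emph{different} connected component of the exceptional locus. Ruling this out along your lines would require a genuine analysis of those log discrepancies using the Farey structure of the chain, which you have not supplied.

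The paper avoids all of this with a local argument at each survivor $E_0$: if $E_0$ is not a $(-1)$-curve, let $\pi\colon Y\to Y'$ contract the $f$-exceptional curves lying over $E_0$, so that $\pi_*E_0$ becomes a $(-1)$-curve on the smooth surface $Y'$. Since $E_0$ has coefficient $0$ in $g^*K_X-K_Y$ and the other contracted curves have coefficients $\le 1$, the pushed-forward log canonical divisor $\pi_*(g^*K_X)$ is canonical along $\pi_*E_0$, whence $\pi^*\pi_*(g^*K_X)=g^*K_X$ and $g$ factors through $\pi$; but $\pi$ contracts a $(-1)$-curve, contradicting the hypothesis. Note that this argument needs neither $\rho(X)=1$ nor Theorem~\ref{thm:6-cases} (both of which your proof implicitly invokes through Corollary~\ref{cor:no-middle-survivor}), and it is exactly the kind of input your final step is missing.
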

\begin{proof}
Let $B_0\subset X$ be a survivor and $E_0$ its strict transform of $B_0$ on $Y$. Since $g$ contracts $f_*^{-1}L$, the curve $E_0$ is $f$-exceptional. To see that $E_0$ is a $(-1)$-curve, it suffices to show that there is no other $f$-exceptional curve over $E_0$: otherwise let $\pi\colon Y\rightarrow Y'$ be the contraction of all the $f$-exceptional curves over $E_0$. Then $Y'$ is smooth and $f\colon Y\rightarrow \PP^2$ factors through some morphism $f'\colon Y'\rightarrow \PP^2$. The log canonical divisor $\pi_*(g^*K_X)$ is canonical along the divisor $E'_0 := \pi_* E_0$. It follows that $\pi^*\pi_* (g^*K_X)=g^* K_X$, and hence $g\colon Y\rightarrow X$ factors through $\pi\colon Y\rightarrow Y'$, contradicting the assumption that $g$ is the minimal resolution.
\end{proof}

\begin{lemma} \label{lem:K2-increases} 
  Let $X_1,X_2$ be two log canonical surfaces with ample canonical
  class, and let $g_n\colon Y_n\to X_n$ be their minimal resolutions
  ($n=1,2$). Assume that there exists a (non identity) morphism $\pi:Y_2\to Y_1$
  mapping the four survivors $E_2^i$ to the four survivors $E_1^i$
  ($1\leq i\leq 4$) in such a way that the LR sequence for each $E_2^i$ prolongs that
  of $E_1^i$. Then one has $K_{X_2}^2 > K_{X_1}^2$. 
\end{lemma}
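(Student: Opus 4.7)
The plan is to reduce to the case where $\pi$ is a single blowup and then establish an explicit identity for $K_{X_2}^2 - K_{X_1}^2$ from which strictness follows by a negative-definiteness argument.

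First, I would factor $\pi\colon Y_2\to Y_1$ as a sequence of single blowups, each extending the LR sequence of one survivor by one letter. By transitivity of strict inequality it suffices to prove the lemma when $\pi$ is a single blowup. Without loss of generality suppose it is an L step extending $E_1^1$ (of weight vector $(a,b)$) to $E_2^1$ (of weight $(a+a_L,b+b_L)$). Let $F_m$ be the $g_1$-exceptional curve adjacent to $E_1^1$ on the L side, so the blowup center of $\pi$ is $F_m\cap E_1^1$, and let $\tilde E_1^1$ denote the strict transform of $E_1^1$ on $Y_2$, a $(-2)$-curve that is now $g_2$-exceptional. Write $g_n^*K_{X_n}=K_{Y_n}+D_n$ with codiscrepancy divisor $D_n=\sum_F b_F^{(n)}F$, and let $b_m^{(1)},\,c^{(2)}\in[0,1]$ denote the codiscrepancies of $F_m$ on $Y_1$ and of $\tilde E_1^1$ on $Y_2$.

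Next I would set $\alpha:=g_2^*K_{X_2}$ and $\beta:=\pi^*g_1^*K_{X_1}$, both $\bQ$-divisors on $Y_2$ with $\alpha^2=K_{X_2}^2$ and $\beta^2=K_{X_1}^2$. Combining $\pi^*K_{Y_1}=K_{Y_2}-E_2^1$ with
$\pi^*D_1=\sum_s b_s^{(1)}\tilde F_s + b_m^{(1)}E_2^1$
(the $E_2^1$ coefficient arises because $F_m$ passes through the blowup center with multiplicity $1$) gives $\alpha-\beta=D_2+E_2^1-\pi^*D_1$. Evaluating $(\alpha-\beta)\cdot\alpha$ by pushing forward via $g_2$, and $(\alpha-\beta)\cdot\beta$ via $g_1\circ\pi$, and invoking the projection formula in each case, yields the identity
\[
K_{X_2}^2-K_{X_1}^2 \;=\; (1-b_m^{(1)})\,K_{X_2}\cdot B_2^1 \;+\; c^{(2)}\,K_{X_1}\cdot B_1^1.
\]
Both summands are non-negative since $K_{X_n}\cdot B_n^1>0$ by ampleness of $K_{X_n}$ and $0\le b_m^{(1)},c^{(2)}\le 1$ by the lc hypothesis.

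For strictness I would argue by contradiction: if equality held, then $b_m^{(1)}=1$ and $c^{(2)}=0$, whence
$\alpha-\beta=\sum_s(b_s^{(2)}-b_s^{(1)})\tilde F_s$
is supported on the common $g_2$-exceptional curves $\tilde F_s$. Since $\alpha\cdot\tilde F_s=0$ (as $\tilde F_s\in g_2\text{-exc}$) and $\beta\cdot\tilde F_s=g_1^*K_{X_1}\cdot F_s=0$ for each such $s$, expanding gives $(\alpha-\beta)^2=0$. But the intersection form on the span of the $g_2$-exceptional curves is negative definite, which forces $\alpha=\beta$. This contradicts $\alpha\cdot\tilde E_1^1=0$ (as $\tilde E_1^1\in g_2\text{-exc}$) together with $\beta\cdot\tilde E_1^1=g_1^*K_{X_1}\cdot E_1^1=K_{X_1}\cdot B_1^1>0$, proving $K_{X_2}^2>K_{X_1}^2$. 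The main obstacle is setting up the key identity cleanly: the delicate points are the bookkeeping that recognizes the $b_m^{(1)}E_2^1$ term in $\pi^*D_1$ and the projection-formula computations that collapse $(\alpha-\beta)\cdot\alpha$ and $(\alpha-\beta)\cdot\beta$ to a single term each, after which the strictness argument becomes essentially automatic.
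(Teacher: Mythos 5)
Your key identity and the negative-definiteness argument for strictness are correct, and they constitute a genuinely different proof from the paper's. The paper never decomposes $\pi$ or touches the fractional codiscrepancies: it works with the \emph{reduced} contracted divisors $C_n$, notes $\vol(K_{X_n})=\vol(K_{Y_n}+C_n)$, uses that $(Y_1,C_1)$ is canonical at the blown-up points (these lie on the survivor $E_1^1$, which is not a component of $C_1$) to get $\pi^*(K_{Y_1}+C_1)\le K_{Y_2}+\pi_*^{-1}C_1<K_{Y_2}+C_2$, and concludes by monotonicity of volumes. Your computation buys an explicit formula for the increment $K_{X_2}^2-K_{X_1}^2$ and a transparent source of strictness, where the paper's final strict inequality of volumes is asserted rather than computed.

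The genuine gap is the very first step. Reducing to a single blowup ``by transitivity'' requires every intermediate surface in the factorization of $\pi$ to satisfy the hypotheses of the lemma: the contraction $g'\colon Y'\to X'$ must exist (negative definiteness of the contracted configuration), $X'$ must be log canonical (so that the analogue of $b_m^{(1)}\le 1$ holds at the next step), and above all $K_{X'}$ must be ample, since your inequality needs $K\cdot B^1>0$ on \emph{both} surfaces of each elementary step. Ampleness of the intermediate canonical classes is essentially what the lemma is designed to propagate, so assuming it is close to circular, and none of these three facts is immediate from the hypotheses. Fortunately you do not need the reduction: run the same computation for the composite $\pi$. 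Writing $\alpha-\beta=D_2+(K_{Y_2}-\pi^*K_{Y_1})-\pi^*D_1$, the only components not contracted by $g_2$ are the new survivors $E_2^i$, whose coefficients are the discrepancies of $E_2^i$ with respect to $(Y_1,D_1)$ and are $\ge 0$ because each blown-up center lies on a survivor and hence meets at most one component of $D_1$, of coefficient $\le 1$; the only components of $\alpha-\beta$ on which $\beta$ is nonzero are the demoted old survivors $\widetilde E_1^i$, with $\beta\cdot\widetilde E_1^i=K_{X_1}\cdot B_1^i>0$. The two projection-formula evaluations and the final negative-definiteness contradiction then go through verbatim, using only the ampleness of $K_{X_1}$ and $K_{X_2}$ that you are actually given.
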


\begin{proof}
  Let $C_n$ be the union of all the curves contracted by $g_n$
  ($n=1,2$). Then $\vol(K_{X_n}) = \vol(K_{Y_{n }}+ C_n)$ and
  $C_1< \pi_* C_2$.
  Since $(Y_1 , C_1)$ is canonical at the points blown up by $\pi$ and
  $ C_1<\pi_* C_2 $, one has
  \begin{math}
    \pi^*(K_{Y_1}+C_1) \le K_{Y_2} + \pi_*\inv C_1 <
    K_{Y_2} + C_2.
  \end{math}
  It follows that
  \[\vol(K_{X_1}) = \vol(K_{Y_1} +C_1) <
    \vol(K_{Y_2}+C_2) =\vol(K_{X_2}).\]
\end{proof}

\begin{theorem}\label{thm:min-k2}
  In the 6 cases of Fig.~\ref{fig:6-cases}, the minimal $K_X^2$ are as
  in Table~\ref{tab:min}, achieved for the listed weight matrices. In
  particular, the absolute minimum is $1/6351$.
\end{theorem}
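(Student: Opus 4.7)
The plan is to reduce Theorem~\ref{thm:min-k2} to a finite search in each of the six cases of Figure~\ref{fig:6-cases}, and then take the overall minimum. Fix one of the six cases. The topological position of the four survivors is then prescribed, so $X$ is determined by the four weight vectors $(w^k_i, w^k_j)\in\bZ^2$ of the survivors. Theorem~\ref{thm:main-formulas}(4) expresses the volume as $K_X^2=(\det\hW)^2/\Delta$, where $\det\hW$ is read off directly from the weight matrix and $\Delta=\det\wh\Gamma\cdot\det\Gamma\dedge$ is an explicit rational function of the eight integers $w^k_i$ via Theorem~\ref{thm:hairy-graph} and Lemma~\ref{lem:edge-det}. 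So in each case the problem is to minimize an explicit arithmetic function of the weight vectors over the admissible set.

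The key reduction step is Lemma~\ref{lem:K2-increases}: within a fixed topological case, prolonging any LR sequence (i.e.\ Farey-subdividing one of the weight vectors towards a neighbouring corner) strictly increases $K_X^2$. Together with Lemma~\ref{lem:survivor}, which guarantees that each survivor is a $(-1)$-curve and hence has a finite LR sequence, this means the minimum in each case is attained at a shortest admissible quadruple, subject to two constraints: (i) the singularities of $X$ are log canonical, and (ii) $K_X$ is ample, i.e.\ $\det\hW>0$. Using the classification of log canonical surface singularities recalled in Section~\ref{sec:singularities}, condition (i) translates at each corner $v_i$ into an inequality on the fractional weight $\bm{n_i}=-1+\sum_j \frac{w^k_i}{w^k_j}$ together with the leg determinants $q_{ij}$, namely that the core graph be of Lie type $A$, $D_n$, $E_{6,7,8}$, $\wE_{6,7,8}$ or $\wD_n$. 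These constraints cut the admissible set down to a short finite list in each of the six configurations.

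For each candidate in the finite list I would then evaluate $(\det\hW)^2/\Delta$ using Theorems~\ref{thm:hairy-graph},~\ref{thm:main-formulas} and Lemma~\ref{lem:edge-det}, take the minimum across the list, compare across the six cases, and verify that the overall minimum is $\tfrac{1}{6351}$, achieved at the specific weight matrix recorded in Table~\ref{tab:min}. The main obstacle I anticipate is combinatorial organization rather than conceptual difficulty: by the strict monotonicity of Lemma~\ref{lem:K2-increases}, any extremal quadruple must sit on the log canonical boundary, so that at least one corner is of elliptic type $\wE_6$, $\wE_7$, $\wE_8$ or $\wD_n$ — otherwise some LR sequence could still be shortened while staying klt, contradicting minimality. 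The heart of the proof is therefore the case-by-case analysis of which pairs of elliptic corner types can coexist in each of the six configurations and which consistent weight vectors they force; the rigidity of $\wE_8$ in particular will produce the extremal example giving $\tfrac{1}{6351}$.
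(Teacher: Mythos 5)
Your skeleton matches the paper's: reduce to a finite list via Lemma~\ref{lem:K2-increases} and the volume formula of Theorem~\ref{thm:main-formulas}, then check the list. But the step where you claim that log canonicity plus ampleness ``cut the admissible set down to a short finite list'' is a genuine gap, and your proposed way of closing it is wrong. Log canonicity alone does not give finiteness: arbitrary chains are klt, so every weight vector $(n,1)$ is admissible from the singularity standpoint, and what the classification actually yields is finitely many \emph{infinite series} (85 of them, in the paper's count) depending on up to 4 integer parameters. Nor does minimality in the prefix order finish the job by itself: the LR sequences $L^{n-1}R$ (weights $(2n-1,2)$) form an infinite antichain, so a priori there could be infinitely many quadruples that cannot be shortened while remaining admissible. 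The paper closes this by observing that the fork classification bounds one coordinate of each varying weight by $6$, and that for $k\in\{2,\dotsc,6\}$ the surface with weight $(n,k)$ prolongs the surface with a weight of the form $(m,1)$ (prefix $L^{m-1}$); the latter is automatically log canonical and is eventually ample, so by Lemma~\ref{lem:K2-increases} all but finitely many members of each series are dominated by a candidate already on the list. You need some version of this cross-series comparison; it does not come for free from ``minimal admissible quadruple.''

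Second, your structural prediction --- that an extremal quadruple must sit on the log canonical boundary, with an elliptic corner of type $\wE_6$, $\wE_7$, $\wE_8$ or $\wD_n$ --- is false, and the error is instructive. Lemma~\ref{lem:K2-increases} compares two surfaces \emph{both} of which are required to be log canonical with ample canonical class; hence an extremal quadruple only needs to be unshortenable within the class of ample-and-lc surfaces, and in practice the binding constraint is ampleness ($\det\hW>0$), not log canonicity. Indeed, the winning weight matrices in Table~\ref{tab:min} all belong to the 4-parameter series of Table~\ref{tab:4-param-series}, for which all singularities are of type $A$ (chains only) --- there are no forks at the corners at all, elliptic or otherwise. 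The long weights such as $(9,1)$ and $(10,1)$ in the extremal examples are forced because shorter truncations fail to have ample $K_X$, not because they fail to be klt. So the case analysis proposed in your last paragraph would search in the wrong place and miss the actual minimizers; the finite check that remains after the correct reduction is still large enough that the paper carries it out by computer.
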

\begin{table}[h!]
  \centering
  \begin{tabular}{ccl}
    Case&Min $K^2$&Achieved at the weight matrix $W$\\
    1 & 1/143  &  [[2, 1, 0, 0], [1, 7, 0, 0], [0, 0, 3, 1], [0, 0, 1, 4]] \\
    2 & 1/143  &  [[2, 1, 0, 0], [1, 7, 0, 0], [1, 0, 2, 0], [1, 0, 0, 3]] \\
    3 & 1/5537  &  [[5, 1, 0, 0], [1, 10, 0, 0], [1, 0, 0, 3], [0, 1, 2, 0]] \\
    4 & 1/5537  &  [[2, 1, 0, 0], [1, 0, 0, 2], [0, 10, 1, 0], [0, 1, 5, 0]] \\
    5 & 1/6351  &  [[1, 2, 0, 0], [9, 0, 1, 0], [1, 0, 0, 5], [0, 1, 2, 0]] \\
    6 & 1/6351  &  [[1, 2, 0, 0], [0, 1, 2, 0], [0, 0, 1, 4], [10, 0, 0, 1]] 
  \end{tabular}
  \caption{Minimum $K_X^2$ in the 6 cases}
  \label{tab:min}
\end{table}

\begin{proof}
  At each of the corners in Fig.~\ref{fig:6-cases} one can have a
  singularity with a fork, of type $D$ or $E$. However, by the
  classification recalled in the introduction, the cases for the
  determinants of the chains out of the fork are 1, 2, 3, 4, 5, 6, or
  $n\ge7$, and the possibilities for any $n\ge7$ are the same as for $7$.

  We thus have finitely many possibilities for the weights $1\le
  w_{i}, w_{j} \le 7$ on each edge $\{v_i,v_j\}$.
  For each of them and for each fork, we have a
  condition that the singularity must be log canonical. The formula
  for the log discrepancy at a vertex was given in
  \cite{alexeev1992log-canonical-surface} and is as follows
  (here, $\deg(v)$ is the valency of the vertex $v$): 
  \begin{displaymath}
    c(u) = \frac1{\det\Gamma} \sum_{v \in {\rm Vert}(\Gamma)}
    \big( 2- \deg(v) \big) \cdot 
    \det \big( \Gamma - {\rm path}(u,v) \big)
  \end{displaymath}
  For log canonical, one must have $c(u)\ge0$ for each of the 4
  corners in the graph.  Using this formula, for each of the cases, we
  get finitely many series that depend on $0\le p \le 4$ parameters
  $x_i$. In cases 1 and 2 there is only one series up to symmetry,
  case 3: 2, case 4: 3, case 5: 60, and case 6: 18 series,
  for a total of 85 series.

  Lemma~\ref{lem:K2-increases} allows us to reduce the proof to
  checking finitely many cases. In each series the weight vectors of
  the survivors are either constant or are of the form $(n,k)\in\bZ_i\times\bZ_j$
  where $k\in\{1,\dotsc,6\}$ is fixed and $n\to\infty$, subject to the condition
  that $(n,k)=1$. If $k=1$ then as in
  Example~\ref{ex:weight-vectors}(2), the LR sequence is $L^{n-1}$. So
  once $K_X$ is ample for a certain surface in the series, increasing
  $n$ only makes $K_X^2$ larger.

  If $k=2$ then the LR sequence for the weight $(2n-1,2)$ is
  $L^{n-1}R$. This is preceded by the sequence $L^{n-1}$ for the
  weight $(n,1)$. Once the canonical class for the latter sequence is
  ample, all the other surfaces obtained by increasing $n$ in the weight
  $(2n-1,2)$ will have a larger volume.
  Note also  that if the surface for $(2n-1,2)$ is log canonical then
  so is the surface for $(n,1)$.

  For the weight $(3n-1,3)$ the
  sequence is $L^{n-1}RL$, and for $(3n-2,3)$ it is $L^{n-1}R^2$. Once
  again, these are preceded by a log canonical surface with the weight
  $(n,1)$ and 
  once for large enough $n$ the latter surface has ample $K_X$, the rest of
  the series is redundant.
  The cases $k=4,5,6$ are done entirely similarly.
  We are thus reduced to finitely many cases, which we checked using
  Theorem~\ref{thm:main-formulas} and a sage \cite{sagemath} script. This concludes the proof.
  Even though it is redundant, below is an
  alternative way to reduce to finitely many checks.

  \smallskip

  As above, we get finitely many series of surfaces appearing in cases
  1--6. Let us work with one of them: $\{X(n_1,\dotsc, n_p)\}$,
  depending on $p\le 4$ parameters. There are only finitely many
  minimal, in the lexicographic order, sequences $(n_1,\dotsc, n_p)$
  for which $K_X$ is ample, i.e. $\det\hW(n_1,\dotsc,n_p)>0$ in
  Theorem~\ref{thm:main-formulas}.  We claim that it is sufficient to
  seek the minimum $K_X^2$ among these minimal sequences plus a few
  more.  By Lemma~\ref{lem:K2-increases}, for each survivor of the
  form $(n,1)$, increasing $n$ makes $K^2$ larger.
  By looking at the 85 series, we
  observe that at most one of the weight vectors $(n_s, k_s)$
  has $k_s\ge 2$, say $k_1\ge2$. We deal with this vector differently.

  Let us denote $x=n_1$. By \eqref{thm:main-formulas} the function
  $f(x)= K^2(x)$ up to a constant has the form
  $\frac{(x-a)^2}{(x-b)^2 + c}$. From the fact that in
  Theorem~\ref{thm:6-cases} no surface with ample $K_X$ has survivors in the corners,
  by row expansion of $\det\hW$ it follows that $a\ge0$. By the
  general theory of \cite{alexeev1994boundedness-and-ksp-2},
  the function $K^2(x)$ is increasing for $x\gg0$. This gives $a\ge
  b$. By computing the derivative $f'(x)$ one easily sees that if
  $f(x+1)\ge f(x)$ then $f(y+1) \ge f(y)$ for any $y\ge x$. Thus, 
  for each of the minimal sequences $(n_1,n_2,\dotsc,n_p)$ it
  suffices to check that $K^2(n_1+1,n_2,\dotsc,n_p) \ge
  K^2(n_1,n_2,\dotsc,n_p)$. We performed this check as well.
\end{proof}

\begin{remark}
  For the best surface in case 2, the surface $Y$ as in \eqref{con:main}
  is obtained from the one in case 1 by
  contracting a $(-1)$-curve. Thus, in fact the surfaces $X$ with ample
  $K_X$ are the
  same. We showed in \cite{alexeev2016open-surfaces} that the surfaces
  in cases 5 and 6 are isomorphic, only the presentations with the
  visible curves are different. Similarly, one can show that the best
  surfaces in cases 3 and 4 are isomorphic.
\end{remark}

\begin{remark}
  In each of the subcases of the main six cases, the series depend on
  $0\le p\le 4$ parameters. The series with the maximal number of 4 parameters are given
  in Table~\ref{tab:4-param-series} and depicted in
  Fig.~\ref{fig:4-param-series}.

\begin{figure}[h!]
  \centering
  \includegraphics[width=\textwidth]{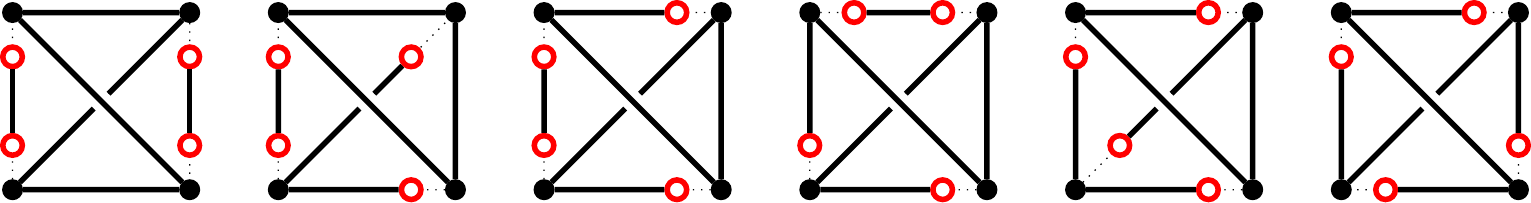}
  \caption{The 4-parameter series}
  \label{fig:4-param-series}
\end{figure}

\begin{table}[h!]
  \centering
  \begin{tabular}{ccl}
    Case & Weight matrix $W$\\
    1 & $[[x_1, 1, 0, 0],\ [1, x_2, 0, 0],\ [0, 0, x_3, 1],\ [0, 0, 1, x_4]]$ \\
    2 & $[[x_1, 1, 0, 0],\ [1, x_2, 0, 0],\ [1, 0, x_3, 0],\ [1, 0, 0, x_4]]$ \\
    3 & $[[x_1, 1, 0, 0],\ [1, x_2, 0, 0],\ [1, 0, 0, x_3],\ [0, 1, x_4, 0]]$ \\
    4 & $[[x_1, 1, 0, 0],\ [1, 0, 0, x_2],\ [0, x_3, 1, 0],\ [0, 1, x_4, 0]]$ \\
    5 & $[[1, x_1, 0, 0],\ [x_2, 0, 1, 0],\ [1, 0, 0, x_3],\ [0, 1, x_4, 0]]$ \\
    6 & $[[1, x_1, 0, 0],\ [0, 1, x_2, 0],\ [0, 0, 1, x_3],\ [x_4, 0, 0, 1]]$
  \end{tabular}
  \caption{Weight matrices in the 4-parameter series}
  \label{tab:4-param-series}
\end{table}
  
In these 4-parameter series, all singularities are of the $A$ type,
i.e.\ correspond to chains only. (In the series with fewer parameters,
\emph{forks do appear.})
Using Corollary~\ref{cor:attaching-2s}, here are the explicit formulas
for the determinants of the singularities:
\begin{enumerate}
\item $\Delta= |x_1,x_2| \cdot |x_3, x_4| \cdot
  |x_1-1, x_3-1, x_2-1, x_4-1, \circlearrowleft|$.
\item $\Delta= |x_1, x_2| \cdot |-x_3, x_1-1, -x_4| \cdot
  |x_2-1, x_3-1, x_4-1, \circlearrowleft|$.
\item $\Delta= |x_1, x_2| \cdot |-x_4, x_1-1, x_3-1, x_4-1, x_2-1, -x_3|$.
\item $\Delta= |x_2, x_3| \cdot |-x_4, x_1-1, x_3-1, x_4-1, x_2-1, -x_1|$.
\item $\Delta= |-x_1, x_3-1, -x_4| \cdot |-x_2, x_1-1, x_4-1, x_2-1,
  -x_3|$.
\item $\Delta= |-x_1, x_4-1, x_2-1, -x_3| \cdot |-x_2, x_1-1, x_3-1,
  -x_4|$. 
\end{enumerate}

In every series, both the numerator and denominator in $K_X^2 =
\frac{(\det\hW)^2}{\Delta}$ is a polynomial of multidegree $(2,2,2,2)$ in the
variables $x_1,x_2,x_3, x_4$ with the leading term
$x_1^2x_2^2x_3^2x_4^2$, and the limit of $K_X^2$ as all $x_i\to\infty$
is 1.
\end{remark}

\section{Pairs $(X,B)$ with reduced $B$, and limit points of volumes}
\label{sec:reduced-B}

\begin{theorem}\label{thm:min-k2b}
  For the the log canonical pairs $(X,B)$ with reduced nonempty
  divisor~$B$
  there are the 12 cases of Fig.~\ref{fig:12-cases}, plus
  $(\bP^2,\sum_{k=1}^4L_k)$. The minimal $(K_X+B)^2$ for these
  cases are as in Table~\ref{tab:min-S1}, achieved for the listed
  weight matrices.  In particular, the absolute minimum for
  the volume in these settings is $1/78$.
\end{theorem}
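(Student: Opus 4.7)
The plan is to closely parallel the proof of Theorem~\ref{thm:min-k2}, now permitting survivors with coefficient $b_i = 1$.

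I would first classify the possible survivor configurations. Since $B$ is reduced, $b_i \in \{0,1\}$; Lemma~\ref{lem:coef-0} forces every edge survivor to have $b_i = 0$, and Lemma~\ref{lem:coef-1} says any survivor with $b_i = 1$ must be a corner (the image of some $L_k$). The assumption $B \ne 0$ then guarantees at least one corner appears as a survivor with coefficient~$1$. Writing $c$ for the number of corners kept as survivors, the remaining $4 - c$ survivors lie on edges, with no three on one edge by Corollary~\ref{cor:no-middle-survivor}. The case $c = 4$ is the isolated pair $(\mathbb{P}^2, \sum_{k=1}^{4} L_k)$. For $c \le 3$ I would combinatorially enumerate the placements of the edge survivors and intersect the list with the log canonical compatibility conditions recalled at the end of Section~\ref{sec:singularities}: a reduced boundary component may be attached to one or both ends of a chain (including the degenerate middle-of-$[2,n,2]$ case), or to the distinguished leg of a $D_n$-fork whose other legs have determinants $(2,2)$. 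These constraints cut the raw enumeration down to the $12$ cases of Figure~\ref{fig:12-cases}.

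Next, for each of the $12$ cases I would write down the parameterized family of weight matrices, following the style of Table~\ref{tab:4-param-series}. As in Theorem~\ref{thm:min-k2}, the local fork-type classification restricts every edge weight vector to the form $(n,k)$ with $1 \le k \le 6$, where $k$ is fixed by the $D$- or $E$-type at a nearby corner and $n \ge 1$ varies. The extended weight matrix $\widehat W$ now carries $c_i = 0$ in the last column at each corner survivor with $b_i = 1$, and $c_i = 1$ at the non-survivor corners and at all edge survivors. Theorem~\ref{thm:main-formulas} then delivers the closed formula $(K_X+B)^2 = (\det \widehat W)^2/\Delta$ as an explicit rational function of the weight parameters, with $\Delta$ computed via Theorem~\ref{thm:hairy-graph} and Lemma~\ref{lem:edge-det}.

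Third, the argument of Lemma~\ref{lem:K2-increases} carries over essentially verbatim: the boundary coefficients at the corner survivors are unchanged when an edge LR-sequence is prolonged, so once $K_X + B$ becomes ample for one surface in a given series, further increases of any weight parameter strictly enlarge the volume. This reduces each series to the finitely many minimal sequences where ampleness first holds, together with a constant number of extra checks treated as in the proof of Theorem~\ref{thm:min-k2} via the shape $f(x) = \lambda (x-a)^2/((x-b)^2 + c)$ of the dependence on the distinguished parameter coordinate with $k \ge 2$. A sage scan over these finitely many sequences then identifies the minimum in each of the $12$ cases, populates Table~\ref{tab:min-S1}, and certifies the global minimum $1/78$. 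The main obstacle is the combinatorial setup: each of the $12$ cases splits further into subseries indexed by the $D$- or $E$-type at the unused corners, producing a substantial but bounded list of parameter families analogous to the $85$ series appearing in Theorem~\ref{thm:min-k2}. Once that enumeration is in place, the volume computation and the extraction of the minimum $1/78$ from Table~\ref{tab:min-S1} is a mechanical application of Theorem~\ref{thm:main-formulas}.
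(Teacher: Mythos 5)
Your proposal is correct and takes essentially the same approach as the paper, whose entire proof is the single line ``the proof is the same as for Theorem~\ref{thm:min-k2}'': you reconstruct exactly that adaptation, namely the classification of survivor positions via Lemmas~\ref{lem:coef-1}, \ref{lem:coef-0} and Corollary~\ref{cor:no-middle-survivor}, the volume formula of Theorem~\ref{thm:main-formulas}, the monotonicity of Lemma~\ref{lem:K2-increases}, and a finite computer scan. The one point you assert without justification is that every corner survivor carries coefficient $b_i=1$ (Lemma~\ref{lem:coef-1} gives only the converse implication), so strictly the enumeration should also admit corner survivors with $b_i=0$; the paper is equally silent on this detail.
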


The proof is the same as for Theorem~\ref{thm:min-k2}.
 
\begin{figure}[h!]
  \centering
  \includegraphics[width=\textwidth]{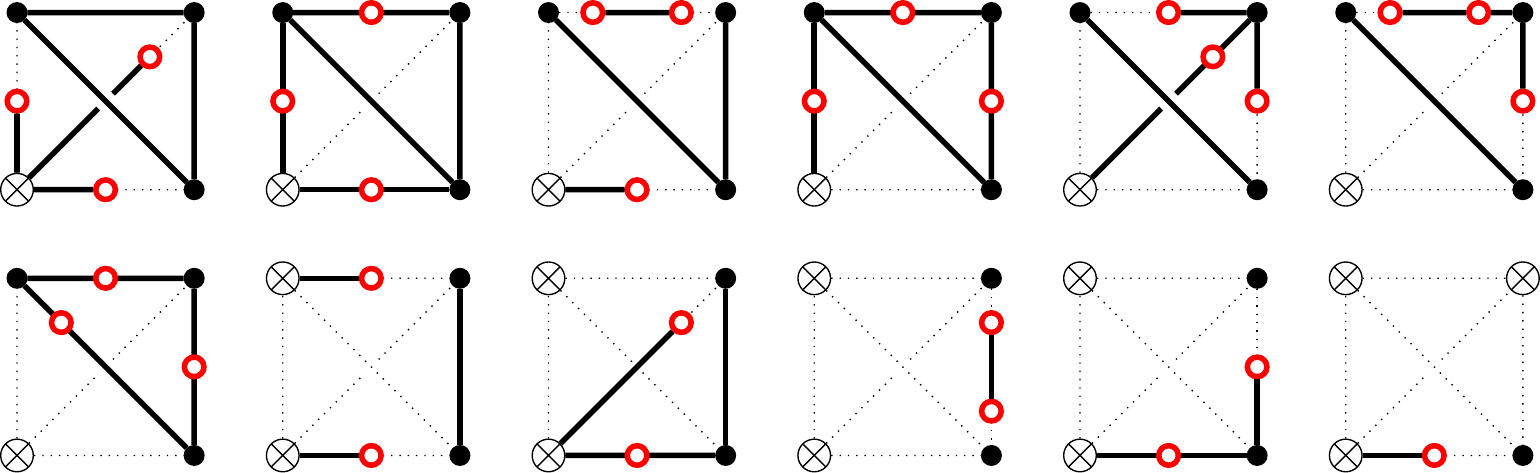}
  \caption{Log canonical pairs with ample $K_X+B$, reduced $B\ne0$}
  \label{fig:12-cases}
\end{figure}

\begin{table}[h!]
  \centering
  \begin{tabular}{ccl}
    Case&Min $(K+B)^2$&Achieved at the weight matrix $W$\\
    1&1/42  &  [[1, 0, 0, 0], [1, 2, 0, 0], [1, 0, 3, 0], [1, 0, 0, 7]] \\
    2& 1/78  &  [[1, 0, 0, 0], [1, 2, 0, 0], [1, 0, 0, 3], [0, 1, 4, 0]] \\
    3& 1/22  &  [[1, 0, 0, 0], [1, 0, 0, 2], [0, 3, 1, 0], [0, 1, 4, 0]]\\
    4&1/70  &  [[1, 0, 0, 0], [1, 2, 0, 0], [0, 1, 2, 0], [0, 0, 1, 4]] \\
    5& 1/22  &  [[1, 0, 0, 0], [1, 0, 2, 0], [0, 2, 1, 0], [0, 0, 1, 3]] \\
    6& 1/15  &  [[1, 0, 0, 0], [0, 3, 1, 0], [0, 1, 2, 0], [0, 0, 1, 2]] \\
    7& 1/60  &  [[1, 0, 0, 0], [0, 1, 2, 0], [0, 2, 0, 1], [0, 0, 1, 3]] \\
    8& 1/6  &  [[1, 0, 0, 0], [0, 1, 0, 0], [1, 0, 0, 2], [0, 1, 3, 0]] \\
    9& 1/6  &  [[1, 0, 0, 0], [0, 1, 0, 0], [1, 0, 2, 0], [1, 0, 0, 3]] \\
    10& 1/3  &  [[1, 0, 0, 0], [0, 1, 0, 0], [0, 0, 2, 1], [0, 0, 1, 2]] \\
    11& 1/6  &  [[1, 0, 0, 0], [0, 1, 0, 0], [1, 0, 0, 2], [0, 0, 2, 1]] \\
    12& 1/2  &  [[1, 0, 0, 0], [0, 1, 0, 0], [0, 0, 1, 0], [1, 0, 0, 2]] \\
    13 & 1 &      [[1, 0, 0, 0], [0, 1, 0, 0], [0, 0, 1, 0], [0, 0, 0, 1]]
  \end{tabular}
  \caption{Minimum $(K_X+B)^2$ in the 13 cases}
  \label{tab:min-S1}
\end{table}

\begin{theorem}
  Let $(X_n,B_n)$, $n\ge n_0$ be a series of log canonical pairs with
  ample $K_{X_n}+B_n$ in which one of the survivors has the weight
  vector $\vec w(n)=(n,k,0,0)$ with $n\to\infty$, and the other weights and
  log discrepancies $c_i$ of the survivors are fixed.  Then the limit of the volumes
  $(K_{X_n}+B_n)^2$ is $(K_{\oX}+\oB)^2$ where the pair $(\oX,\oB)$ is
  obtained by replacing $\vec w(n)$ by $(1,0,0,0)$ and setting the
  log discrepancy $\bar c_1=0$. In other words, $L_1$ is a survivor for
  $\oX$ and it appears in $\oB$ with coefficient $b_1=1$.
\end{theorem}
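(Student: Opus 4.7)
The plan is to apply Theorem~\ref{thm:main-formulas} directly, which gives $(K+B)^2 = (\det\hW)^2/\Delta$, and to read off the leading-order behavior of both factors in the parameter $n$.

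For the numerator, arrange the varying survivor $E^1$ to be the first row of $\hW$, so that row reads $(n,k,0,0,c_1)$. Expansion along this row gives
\[
  \det\hW(n) = n\cdot C_{11} - k\cdot C_{12} + c_1\cdot C_{15},
\]
where the $4\times 4$ cofactors $C_{1j}$ depend only on the fixed part of $\hW$. Hence $\det\hW(n) = n\cdot C_{11} + O(1)$. The extended matrix for $(\oX,\oB)$ has first row $(1,0,0,0,0)$, and the same row expansion yields $\det\overline{\hW} = C_{11}$, so $\det\hW(n) \sim n\cdot\det\overline{\hW}$ as $n\to\infty$.

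For the denominator, write $\Delta(n) = \det\wh\Gamma(n)\cdot \det\Gamma\dedge(n)$ as in Theorem~\ref{thm:singularities}. Among the core marks only $\bm{n}_1 = -1 + n/k + (\mathrm{fixed}) \sim n/k$ depends on $n$, and in the product $\prod q_{ij}$ from Theorem~\ref{thm:hairy-graph} the hair from $E^1$ on the $v_1$-side contributes a factor $k$. Expanding $\det\bm\Gamma$ along the row of $v_1$ yields leading term $(n/k)\cdot\det(\bm\Gamma-v_1)$; after multiplying by this $k$, the leading term of $\det\wh\Gamma(n)$ is $n\cdot\det(\bm\Gamma-v_1)\cdot q_{\mathrm{rest}}$. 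The remaining factor of $n$ arises in one of two ways. If the edge $\{v_1,v_2\}$ carries a second survivor $E^2$, Lemma~\ref{lem:edge-det} shows the chain from $E^1$ to $E^2$ has determinant $|nw^2_2 - kw^2_1| \sim w^2_2\cdot n$, which in the limit matches the determinant $w^2_2 = |1\cdot w^2_2 - 0\cdot w^2_1|$ of the new edge chain between $L_1$ and $E^2$ in $\oX$. If $E^1$ is the only survivor on $\{v_1,v_2\}$, then the hair from $E^1$ on the $v_2$-side carries a factor $q=n$, while $\bm{n}_2 = -1 + k/n + (\mathrm{fixed})$ is bounded and this edge disappears from $\Gamma\dedge$ in $\oX$. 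In both cases a direct bookkeeping gives $\Delta(n) \sim n^2\cdot\overline{\Delta}$.

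Dividing the two asymptotics yields $(K_{X_n}+B_n)^2 = (\det\hW(n))^2/\Delta(n) \to (\det\overline{\hW})^2/\overline{\Delta} = (K_{\oX}+\oB)^2$. The main technical burden is the careful accounting of the factor $k$ appearing in $\prod q_{ij}$ against the reciprocal $1/k$ in $\bm{n}_1$, and the verification in the two-survivor case that the linear-in-$n$ determinant of the internal edge chain in $\Gamma\dedge$ exactly reproduces the determinant of the new edge chain in $\Gamma\dedge$ for $\oX$ after the substitution.
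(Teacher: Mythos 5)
Your proposal is correct and takes essentially the same route as the paper: both apply Theorem~\ref{thm:main-formulas} and identify $\det\hW(X_n)$ as linear in $n$ with leading coefficient $\det\hW(\oX)$ (via the row replacement $(n,k,0,0;c_1)\mapsto(1,0,0,0;0)$) and $\Delta(X_n)$ as quadratic in $n$ with leading coefficient $\Delta(\oX)$, using Theorems~\ref{thm:hairy-graph} and~\ref{lem:edge-det}. The only difference is that the paper asserts the $n^2$-coefficient identification as ``easily follows,'' whereas you carry out the bookkeeping (the cancellation of the hair determinant $k$ against the mark $n/k$, and the two cases for the second factor of $n$) explicitly.
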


\begin{proof}
  By Theorem~\ref{thm:main-formulas}, we have
  $(K_X+B)^2= \frac{(\det\hW)^2}{\Delta}$.  The function
  $\det\hW(X_n)$ is linear in $n$, with the leading coefficient equal to the
  determinant of the matrix obtained by replacing the row
  $(n,k,0,0; c)$ by $(1,0,0,0; 0)$.  The determinant $\Delta(X_n)$ for
  the singularities is a quadratic function of $n$ and it easily
  follows from the formulas in~\eqref{thm:hairy-graph},
  \eqref{lem:edge-det} that the coefficient of $n^2$ is
  $\Delta(\oX)$. Thus,
  \begin{displaymath}
    \lim_{n\to\infty}
    \frac{\big(\det\hW(X_n)\big)^2}{\Delta(X_n)} =
    \frac{\big(\det\hW(\oX)\big)^2}{\Delta(\oX)}.
  \end{displaymath}

\end{proof}

\begin{corollary}
  The smallest limit point for the log canonical pairs $(X,B)$ with
  coefficients in $\{0,1\}$ with $\rho(X)=1$ obtained from the four-line
  configuration is~$1/78$.
\end{corollary}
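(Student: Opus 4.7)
The plan is to combine the preceding theorem with the enumeration in Theorem~\ref{thm:min-k2b}. A pair $(X,B)$ in Construction~\ref{con:main} with $\rho(X)=1$ and $b_i\in\{0,1\}$ is encoded by finitely many discrete data (one of the combinatorial types in Fig.~\ref{fig:6-cases} or Fig.~\ref{fig:12-cases}) together with a weight matrix of positive integer entries, so any accumulation of volumes must be caused by some weight entry tending to infinity. The preceding theorem identifies the resulting limit as the volume $(K_{\bar X}+\bar B)^2$ of the pair obtained by replacing the divergent weight vector $(n,k,0,0)$ by $(1,0,0,0)$ and setting the corresponding coefficient $b_1=1$; in particular, the limit pair has a reduced nonempty boundary. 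When several weight entries diverge simultaneously we iterate this reduction, which only increases the number of reduced components. Either way, every accumulation point is realized as $(K_{\bar X}+\bar B)^2$ for some lc pair with $\rho(\bar X)=1$ and $\bar B$ reduced and nonempty.

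By Theorem~\ref{thm:min-k2b} the volumes of such pairs fall into the thirteen cases of Table~\ref{tab:min-S1}, whose minimum is $1/78$, attained in case~$2$. Hence no limit point can be smaller than $1/78$. To verify that this bound is sharp, I would perturb the case~$2$ minimizer: starting from the weight matrix $[[1,0,0,0],[1,2,0,0],[1,0,0,3],[0,1,4,0]]$ with $b_1=1$, replace the first row by $(n,1,0,0)$ with $n\to\infty$ and set $b_1=0$, producing a sequence of pairs $(X_n,0)$. By Theorem~\ref{thm:main-formulas}, both $\det\hW(X_n)$ and $\Delta(X_n)$ are explicit polynomials in $n$, so a direct check confirms that $(X_n,0)$ is lc with ample $K_{X_n}$ for all $n\gg 0$. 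The preceding theorem then yields $K_{X_n}^2\to 1/78$, exhibiting $1/78$ as an accumulation point.

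The only delicate step is the reduction in the first paragraph — namely, that every accumulation of volumes really does factor through a sequence of weight-entry degenerations of the form governed by the preceding theorem. This follows from the finiteness of combinatorial types together with the eventual monotonicity of the volume in each weight parameter (as already exploited in the proof of Theorem~\ref{thm:min-k2}): once a single entry is large enough, further increasing it only raises the volume, so genuine accumulation requires a true limit $n\to\infty$ in at least one coordinate, to which the preceding theorem applies.
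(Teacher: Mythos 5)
Your argument is correct and essentially the paper's: the lower bound comes from combining the preceding limit theorem with Theorem~\ref{thm:min-k2b}, and sharpness from exhibiting a one-parameter family whose volumes converge to the case-2 minimizer of Table~\ref{tab:min-S1}. The paper realizes this family as case 5 of Table~\ref{tab:4-param-series} with $x_1=2$, $x_3=3$, $x_4=4$ and $x_2\to\infty$ rather than your perturbation of the first row, but both degenerate to the same pair, and the lc/ampleness check you defer is automatic for these series since all their singularities are chains.
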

\begin{proof}
  Indeed, the minimal volume $\frac1{78}$ in case 2 of
  Table~\ref{tab:min-S1} appears as the limit of the volumes in case
  5 of Table~\ref{tab:4-param-series} for $x_1=2$, $x_3=3$, $x_4=4$
  and $x_2\to\infty$.
\end{proof}

We conclude with the following:

\begin{lemma}
  The set $\bK^2 = \{K_X^2\}$ of volumes of log canonical surfaces
  obtained via Construction~\ref{con:main} has accumulation complexity
  4, i.e. $\Acc^4(\bK^2) \ne\emptyset$,
  $\Acc^5(\bK^2)=~\emptyset$, where $\Acc^0(\bK^2) = \bK^2$ and
  $\Acc^{n+1}(\bK^2)$ is the set of accumulation points of
  $\Acc^n(\bK^2)$.
\end{lemma}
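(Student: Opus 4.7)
The plan is to split the claim into two parts: the lower bound $\Acc^4(\bK^2)\neq\emptyset$ and the upper bound $\Acc^5(\bK^2)=\emptyset$, treating $\rho(X)=1$ as throughout the paper. Both parts will rely on the classification used in the proofs of Theorems~\ref{thm:min-k2} and~\ref{thm:min-k2b}: the set $\bK^2$ is a finite union of values coming from integer-parametric series of surfaces, each depending on $p\le 4$ positive integer parameters, with $K_X^2=(\det\hW)^2/\Delta$ a rational function in these parameters via Theorem~\ref{thm:main-formulas}. Because $\Acc$ commutes with finite unions, it suffices to analyze the accumulation complexity of each series in isolation.

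For the lower bound I will fix one of the 4-parameter series from Table~\ref{tab:4-param-series}. The remark following Theorem~\ref{thm:min-k2} records that both $(\det\hW)^2$ and $\Delta$ are polynomials of multidegree $(2,2,2,2)$ in $(x_1,x_2,x_3,x_4)$ with common leading monomial $x_1^2x_2^2x_3^2x_4^2$, so the iterated coordinate limit
\begin{displaymath}
  \lim_{x_1\to\infty}\lim_{x_2\to\infty}\lim_{x_3\to\infty}\lim_{x_4\to\infty} K_X^2(x_1,x_2,x_3,x_4) \;=\; 1
\end{displaymath}
exists and equals~$1$. At each of the four stages, the intermediate rational function (obtained by sending one more variable to infinity) is a bona fide accumulation point of the countably infinite family of values from the preceding stage; unwinding the chain places $1$ in $\Acc^4(\bK^2)$.

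For the upper bound I will prove by induction on $p$ that every $p$-parameter series $A$ satisfies $\Acc^{p+1}(A)=\emptyset$. Any accumulation point of $A$ arises as a limit of $K_X^2$ along some sequence in the admissible parameter region; passing to a subsequence, I will assume that a fixed subset $I\subset\{1,\dotsc,p\}$ of coordinates tends to infinity while the others stabilize. The structural input I need to establish is that for every such $I$, the coefficient of $\prod_{i\in I}x_i^2$ in both $(\det\hW)^2$ and in $\Delta$ is a polynomial in $(x_j)_{j\notin I}$, and these two polynomials have proportional top-degree structure, so that the limit of $K_X^2$ as $\{x_i\}_{i\in I}\to\infty$ at arbitrary relative rates is a well-defined rational function of $(x_j)_{j\notin I}$ of the same multidegree-$(2,\dotsc,2)$ type with matching leading coefficients. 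This reduces $\Acc^1(A)$ to a finite union of value sets of rational functions in at most $p-1$ variables of the same type, and the induction then gives $\Acc^{p+1}(A)=\emptyset$; since $p\le 4$ for every series, $\Acc^5(\bK^2)=\emptyset$. The main obstacle will be the uniform verification of this ``matching leading coefficients'' property over all subsets $I$ and all $85$ series, which is what rules out a continuum of new accumulation points coming from non-coordinate sequences like $x_i=c_in$ with varying slopes $c_i$; I expect a conceptual argument based on the multilinearity of $\det\hW$ and the factorization of $\Delta$ into chain and cycle determinants (as in the remark after Theorem~\ref{thm:min-k2}) to reduce the verification to finitely many small matrix computations.
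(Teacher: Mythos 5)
Your proposal takes essentially the same route as the paper: decompose $\bK^2$ into the finitely many ($85$) parametric series from the proof of Theorem~\ref{thm:min-k2}, obtain a point of $\Acc^4(\bK^2)$ by iterated coordinate limits in a $4$-parameter series, and bound the accumulation complexity by the number of parameters $p\le 4$. The paper's own proof is a two-sentence sketch of exactly this argument, so your more explicit treatment of the upper bound (in particular, flagging that non-coordinate sequences such as $x_i=c_in$ must not create extra layers of accumulation points) only makes precise what the paper hides under ``etc.'', though you too leave that verification as an expectation rather than carrying it out.
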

\begin{proof}
  Indeed, in the proof of Theorem~\ref{thm:min-k2} we produced
  finitely many (85 to be exact) series of surfaces $X(n_1,\dotsc,n_p)$ that depend on
  $p\le 4$ integer parameters. Sending any of $n_i\to\infty$ gives an
  accumulation point, sending $n_j\to\infty$ for $n_j\ne n_i$ gives a
  point in $\Acc^2(\bK^2)$, etc.
\end{proof}

\bibliographystyle{amsalpha}

\def\cprime{$'$}
\providecommand{\bysame}{\leavevmode\hbox to3em{\hrulefill}\thinspace}
\providecommand{\MR}{\relax\ifhmode\unskip\space\fi MR }
\providecommand{\MRhref}[2]{%
  \href{http://www.ams.org/mathscinet-getitem?mr=#1}{#2}
}
\providecommand{\href}[2]{#2}

\end{document}